\numberwithin{equation}{section}
\newcommand{\Fr}{\textnormal{Fr}}
\newtheorem{theorem}{Theorem}[section]
\newtheorem{proposition}[theorem]{Proposition}
\newtheorem{lemma}[theorem]{Lemma}
\newtheorem{corollary}[theorem]{Corollary}
\newtheorem{conjecture}[theorem]{Conjecture}
\theoremstyle{definition}
\newtheorem{definition}[theorem]{Definition}
\newtheorem{remark}[theorem]{Remark}
\newtheorem{example}[theorem]{Example}
\newcommand{\cB}{\mathcal B}
\newcommand{\C}{\mathbb C}
\newcommand{\F}{\mathbb F}
\newcommand{\N}{\mathbb N}
\newcommand{\Z}{\mathbb Z}
\title{Invertible Calabi-Yau Orbifolds over Finite Fields}
\author{Marco Aldi}
\address{Department of Mathematics and Applied Mathematics\\
Virginia Commonwealth University\\
Richmond, VA 23284, USA}
\author{Andrija Peruni\v{c}i\'{c}}
\address{}
\begin{document}

\begin{abstract}
In the context of Berglund-H\"ubsch mirror symmetry, we compute the eigenvalues of the Frobenius endomorphism acting on a $p$-adic version of Borisov's complex. As a result, we conjecture an explicit formula for the number of points of crepant resolutions of invertible Calabi-Yau orbifolds defined over a finite field. 
\end{abstract}

\maketitle

\section{Introduction}

A generalization of the Greene-Plesser example \cite{GP}, the Berglund-H\"ubsch construction \cite{BH} produces pairs of mirror Calabi-Yau orbifolds of hypersurfaces in weighted projective spaces. The explicit case-by-case isomorphism of these mirror models was first proved by Kreuzer \cite{Kre} and, later and in greater generality, by Krawitz  \cite{Kra}. A conceptual proof of mirror symmetry for Berglund-H\"ubsch pairs was given by Borisov \cite{B1} using vertex algebras. Even though the physically natural ground field for Berglund-H\"ubsch mirror symmetry is that of the complex numbers, the construction works over arbitrary fields. Over the field of $p$-adic numbers an arithmetically meaningful Frobenius operator can be constructed \cite{P} at the level of cohomology, but unfortunately it admits no obvious lift to the level of cochains. Motivated by this problem, a deformation of Borisov's complex was constructed in \cite{AP} (using a D-module theoretic approach inspired by \cite{SchwarzShapiro09}) with the property that when specialized to the $p$-adic numbers it would indeed carry a natural cochain-level Frobenius operator. Furthermore, it was shown in \cite{AP} that the Frobenius operators acting on deformed Borisov complexes corresponding to Berglund-H\"ubsch mirror pairs are intertwined by mirror symmetry.

In this paper we continue the line of inquiry started in \cite{AP} and further investigate the arithmetic information stored in the $p$-adic deformed Borisov complex. Both the Borisov complex and its deformation introduced in \cite{AP} are constructed out of polynomials in two sets of variables, corresponding to the variables of the superpotentials on the two mirror models. While the Frobenius operator used in \cite{AP} acts on all the variables, in this paper we restrict it to only one set of variables. Additionally, we replace the standard Frobenius operator with its more arithmetically natural inverse (see e.g.\ \cite{M}). Finally, following earlier work on the (\'etale) cohomology of orbifolds \cite{Behrend93,Rose07}, we twist the Frobenius operator by a suitable power of $p$ depending on the notion of age. Our first observation is that if the determinant of the matrix defining the superpotential divides $p-1$, then our modified Frobenius operator is diagonalized by the standard basis of the total cohomology of the deformed Borisov bicomplex. Furthermore, we explicitly express the eigenvalues of the cohomological Frobenius in terms of $p$-adic gamma functions. 

Drawing motivation from the LG/CY correspondence \cite{ChiodoRuan11} and from methods for calculating hypersurface point counts over finite fields via traces of the Frobenius endomorphism acting on the Monsky-Washnitzer cohomology, we form a conjecture: the number of points of any crepant resolution of a Berglund-H\"ubsch orbifold can be computed by the supertrace of our modified Frobenius operator acting on the cohomology of the deformed $p$-adic Borisov complex associated to the orbifold. As evidence for our conjecture, we first point out that it correctly counts points modulo $p$. We then prove the conjecture for all Berglund-H\"ubsch elliptic curves (including those expressed in terms of chains, loops, and their combinations), and for all weighted diagonal K3 surfaces. In the latter case, our orbifold calculations recover the results obtained by Goto \cite{Goto} through a careful analysis of the corresponding minimal resolutions of singularities.

\section{Preliminaries}
In this section we recall the basics of the $p$-adic Berglund-H\"ubsch construction. We refer to \cite{AP} for additional detail.

\begin{definition}\label{def:2.1}
Let $A$ be an invertible $n\times n$ matrix $A$ with non-negative entries and consider the polynomial
\[
W_A(x)=\sum_{i=1}^n x^{e_i A}
\]
where $e_1,e_2,\ldots,e_n$ are the elements of the standard basis of $\mathbb Z^n$ (throughout the paper, we assume our vectors in $\mathbb Z^n$ to be row vectors). We say that $A$ is {\it Berglund-H\"ubsch} over a prime field $\mathbb F_p$ if the following conditions are met:
\begin{enumerate}[1)]
\item $W_A$ is quasi-homogeneous;
\item $(\partial_{x_1}W_A,\ldots, \partial_{x_n}W_A)$ is a regular sequence in $\mathbb F_p[x]=\mathbb F_p[x_1,\ldots,x_n]$;
\item $\det(A)$ divides $p-1$.
\end{enumerate}
\end{definition}

\begin{remark}
Conditions 1) and 2) in Definition \ref{def:2.1} can be summarized as saying that the polynomial $W_A$ is {\it invertible}. While condition 3) is admittedly strong, it does hold for infinitely many primes once the integer matrix $A$ is fixed. While for simplicity we work with fields of prime cardinality $p$, most of our arguments can be adapted to the case of cardinality $p^r$.
\end{remark}

\begin{example}
The matrix $A=\left(\begin{smallmatrix} 2 & 1 \\ 0 &3\end{smallmatrix}\right)$ is Berglund-H\"ubsch over $\mathbb F_7$. The corresponding polynomial is $W_A(x)=x_1^2x_2+x_2^3$.
\end{example}

\begin{remark}
$A$ is Berglund-H\"ubsch over $\mathbb F_p$ if and only if $A^T$ is Berglund-H\"ubsch over $\mathbb F_p$.
\end{remark}

\begin{definition}
Let $A$ be an $n\times n$ Berglund-H\"ubsch matrix over $\mathbb F_p$. Fix a $\det(A)$-th root of unity $\omega$ and consider the scaling action of $\mathbb Z^n$ on $\mathbb F_p[x]$ extended by linearity from $\mu\cdot x^\gamma = \omega^{\gamma\mu^T} x^\gamma$. Under this action, the stabilizer of $W_A(x)$ is canonically identified with $G_A=\mathbb Z^n/\mathbb Z^nA^T$.
\end{definition}

\begin{example}
The matrix $A=\left(\begin{smallmatrix} 2 & 1 & 0\\ 0 & 2 & 1\\ 1& 0 & 3\end{smallmatrix}\right)$ is Berglund-H\"ubsch over $\mathbb F_{53}$ and $G_A$ is isomorphic to the cyclic group of order $13$ generated by $e_1$.
\end{example}

\begin{proposition}[\cite{Kre}]
Let $A$ be a Berglund-H\"ubsch matrix over a field $\mathbb F_p$. If $A$ is irreducible (i.e.\ cannot be written as a direct sum of Berglund-H\"ubsch matrices of smaller size) then, up to permutation of the variables, $W_A(x)$ is either a {\it chain} (i.e.\ of the form $x_1^{a_1}x_2+x_2^{a_2}x_3+\cdots+x_{n-1}^{a_{n-1}}x_n+x_n^{a_n}$) or a {\it loop} (i.e.\ of the form $x_1^{a_1}x_2+x_2^{a_2}x_3+\cdots+x_{n-1}^{a_{n-1}}x_n+x_n^{a_n}x_1$).
\end{proposition}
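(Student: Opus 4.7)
My plan is to prove the statement in three stages: (i) reduce to the case that the matrix $A$ has positive diagonal; (ii) show that each row of $A$ has at most one nonzero off-diagonal entry, which must equal $1$; (iii) encode the resulting combinatorial data as a directed graph and use irreducibility to recognize that graph as a single path or a single cycle.

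For stage (i), since $\det(A)\ne 0$, the Leibniz expansion guarantees a permutation $\sigma\in S_n$ with $\prod_i A_{i,\sigma(i)}>0$. Permuting the rows of $A$ merely reorders the summands of $W_A$, so after such a relabeling we may assume $A_{ii}>0$. Consequently each monomial $M_i := x^{e_iA}$ carries the pure power $x_i^{A_{ii}}$ as a factor.

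For stage (ii) — the heart of the argument — I claim each $M_i$ is either $x_i^{a_i}$ or $x_i^{a_i}x_{j(i)}$ for a single $j(i)\ne i$, with $A_{i,j(i)}=1$. Suppose instead that some $M_i$ contains a variable $x_j$ ($j\ne i$) with $A_{ij}\ge 2$, or contains two distinct off-diagonal variables $x_j,x_k$. The plan is to produce a nonzero point in the Jacobian variety $V(\partial_{x_1}W_A,\ldots,\partial_{x_n}W_A)\subset \overline{\mathbb F_p}^n$, contradicting the regular-sequence hypothesis. The mechanism is standard: find an off-diagonal variable $x_\ell$ whose vanishing kills $\partial_{x_\ell}M_i$ to order $\ge 1$ but leaves enough freedom in the other coordinates, then use the quasi-homogeneous Euler relation $d\cdot W_A = \sum q_i x_i \partial_{x_i}W_A$ together with a case-by-case analysis of the bipartite incidence graph between variables and monomials to solve the remaining equations non-trivially. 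I expect this to be the main obstacle, since the argument must be run carefully over $\mathbb F_p$ (using that $\det(A)\mid p-1$ to avoid denominators that vanish modulo~$p$), following the strategy of Kreuzer's original classification.

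For stage (iii), form the directed graph $G$ on $\{1,\ldots,n\}$ by drawing the edge $i\to j(i)$ whenever $M_i$ has a second variable, so by (ii) every vertex has out-degree $0$ or $1$. Each weakly connected component of $G$ is then either a rooted in-tree oriented toward a unique out-degree-zero sink, or a "rho-shape" consisting of a single directed cycle with in-trees hanging off. A direct count of edges versus vertices, together with the fact that the positive-diagonal invertible matrix $A$ cannot have a repeated column-support pattern (which would again violate the regular sequence condition on $\partial_{x_{j(i)}}W_A$ when two distinct monomials feed into the same non-diagonal variable via a branching), eliminates all branching. Hence every component of $G$ is a simple path or a simple directed cycle. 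The irreducibility of $A$ as a Berglund-H\"ubsch matrix forbids the existence of two or more components (which would permute into a block direct sum), so $G$ is a single path — yielding a chain after relabeling variables to follow the path — or a single cycle, yielding a loop. This completes the classification.
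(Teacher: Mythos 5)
The paper offers no proof of this proposition: it is imported from Kreuzer \cite{Kre} (the underlying classification being that of Kreuzer--Skarke \cite{KS}), so there is no in-house argument to compare yours against, and I can only assess your proposal on its own terms. Your architecture is the standard one, and stage (i) is fine as written.

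The genuine gap is that stage (ii) --- which you yourself flag as ``the heart of the argument'' --- is announced rather than proved. You state the claim (each monomial is $x_i^{a_i}$ or $x_i^{a_i}x_{j(i)}$ with the second exponent equal to $1$), describe a mechanism (``find an off-diagonal variable $x_\ell$ whose vanishing kills $\partial_{x_\ell}M_i$ \dots then use the Euler relation together with a case-by-case analysis of the bipartite incidence graph''), and then write that you ``expect this to be the main obstacle.'' No singular point is actually exhibited in either bad case (an off-diagonal entry $A_{ij}\ge 2$, or two distinct off-diagonal variables in one monomial), and the case analysis is never run; as it stands this is a plan, not a proof. The same deferral recurs in stage (iii): the elimination of branching --- two monomials $x_i^{a_i}x_\ell$ and $x_j^{a_j}x_\ell$ feeding the same variable, as in $x_1^2x_3+x_2^2x_3+x_3^2$ --- is asserted in a parenthesis, but this configuration satisfies all your stage-(ii) constraints and must be killed by explicitly producing a nonzero common zero of the partials on the locus $x_\ell=0$. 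A cleaner route to (ii), closer to \cite{KS}, is to first prove the necessary condition that for \emph{every} variable $x_i$ some monomial of $W_A$ has the form $x_i^{a}$ or $x_i^{a}x_j$ (otherwise the partials share a positive-dimensional zero locus), and then play the count of $n$ monomials against $n$ variables to force a bijection between monomials and the variables they serve; even then one must handle monomials such as $x_ix_j$ that can serve two variables. Finally, your appeal to $\det(A)\mid p-1$ ``to avoid denominators that vanish modulo $p$'' misidentifies the characteristic-$p$ issue: the danger is an exponent divisible by $p$ annihilating a partial derivative of a monomial (and a degree $d\equiv 0 \bmod p$ trivializing the Euler relation), and that is where the hypothesis on $p$ must actually be used.
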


\begin{example}
A {\it diagonal} polynomial of the form $W_A(x)=\sum_{i=1}^n x_i^{a_i}$ with $A={\rm diag}(a_1,\ldots,a_n)$ can be thought of as a direct sum of one-dimensional chains.
\end{example}

\begin{definition}
Let $A$ be Berglund-H\"ubsch matrix over $\mathbb F_p$ and let $A^{-T}$ be its inverse transpose. Let $\widetilde \cB_A$ be the subspace of overconvergent series in the even variables $\{x_i,y_i\}_{i=1}^n$ and in the odd variables $\{\theta_i\}_{i=1}^n$ spanned by all monomials $x^\gamma y^\lambda \theta^I$ with the property that $\lambda A^{-T}\ge 0$. Let $\cB_A$ be the quotient of $\widetilde \cB_A$ by the ideal generated by the monomials $x^\gamma y^\lambda \theta^I$ such that $\gamma A^{-1}\lambda^T>0$. Let $Q_A^\vee$ be the {\it grading} operator acting on $\cB_A$ with respect to which each monomial $x^\gamma y^\lambda \theta^I$ is an eigenvector with eigenvalue $q^\vee(\lambda,I)$ equal to the number of indices $i\in \{1,\ldots,n\}$ such that $(\lambda A^{-T})_i>(\lambda A^{-T})_iI_i$. Similarly, the {\it dual grading operator} $Q_A$ has the property that each monomial $x^\gamma y^\lambda \theta^I$ is an eigenvector with eigenvalue
\[
q(\lambda,I)=I_1+\cdots+I_n+q^\vee(\lambda,I).
\]
For a fixed $\pi\in \C_p$ such that $\pi^{p-1}=-p$, we define the operators
\[
d_A(x^\gamma y^\lambda \theta^I)=\sum_{i=1}^n\gamma_i x^\gamma y^\lambda \theta^{I+e_i} + \pi\sum_{i,j=1}^n A_{ji}\, x^{\gamma + e_j A}y^\lambda \theta^{I+e_i}
\]
and
\[
d_{A}^\vee(x^\gamma y^\lambda \theta^I)=\sum_{i=1}^n\pi^{-1}(\lambda A^{-T})_i\, x^\gamma y^\lambda \partial_{\theta_i} \theta^I + \sum_{i=1}^nx^\gamma y^{\lambda + e_iA^T} \partial_{\theta_i} \theta^I
\]
acting on $\mathcal B_A$.
\end{definition}

\begin{proposition}[\cite{AP}]\label{prop:2.8}
Let $A$ be a Berglund-H\"ubsch matrix over $\mathbb F_p$. Then:
\begin{enumerate}[1)]
    \item $(\mathcal B_A,d_A,d^\vee_A)$ is a bicomplex with respect to the bigrading induced by $Q_A$ and $Q_A^\vee$.

    \item $(\cB_A,d_A,d_A^\vee)$ decomposes into the direct sum of sub-bicomplexes $(\cB_A(\gamma,\lambda),d_A,d_A^\vee)$ labeled by classes $[(\gamma,\lambda)]\in \Z^{2n}/(\Z^{2n}(A\oplus A^T))\cong G_{A^T}\times G_A$.
\item For each $(\gamma,\lambda)\in G_{A^T}\times G_A$, the total cohomology of $(\cB_A(\gamma,\lambda),d_A,d_A^\vee)$ is at most one dimensional.
\item (Unprojected Duality) For every $(\gamma,\lambda)\in G_{A^T}\times G_A$, the cohomology of the cochain complex $(\cB_A(\lambda,\gamma),d_A+d^\vee_A)$ is isomorphic to the cohomology of the cochain complex $(\cB_{A^T},d_{A^T}+d_{A^T}^\vee)$.
\end{enumerate}
\end{proposition}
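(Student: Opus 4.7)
The plan is to dispatch the four assertions in order, handling (1) and (2) by direct inspection on monomials, (3) by a spectral sequence argument that reduces to a Koszul calculation, and (4) by an explicit generator-level swap isomorphism.

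For (1), I would verify monomial-by-monomial that $d_A$ and $d_A^\vee$ shift the bigrading induced by $(Q_A, Q_A^\vee)$ in a uniform way, using the quotient relation $\gamma A^{-1}\lambda^T > 0$ to reconcile the cases where the naive shift on $\widetilde\cB_A$ would otherwise be ambiguous. The relations $d_A^2 = (d_A^\vee)^2 = d_A d_A^\vee + d_A^\vee d_A = 0$ then follow from standard cancellations: the squaring relations reduce to symmetric coefficients meeting the antisymmetry of $\theta_i \theta_{i'}$ for $i \ne i'$, and the anticommutation is a direct check on the cross terms involving $A_{ji}$, $(\lambda A^{-T})_i$, and $e_i A^T$. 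For (2), the differentials shift $\gamma$ only by elements of $\Z^n A$ and $\lambda$ only by elements of $\Z^n A^T$, so the decomposition into blocks labeled by $[(\gamma,\lambda)] \in \Z^{2n}/\Z^{2n}(A \oplus A^T) \cong G_{A^T} \times G_A$ is immediate.

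Statement (3) is the main obstacle. My strategy is to compute $d_A^\vee$-cohomology of each block $\cB_A(\gamma, \lambda)$ first. The second summand of $d_A^\vee$ is (after relabeling) the Koszul differential for the regular sequence $(\partial_{y_1} W_{A^T}, \ldots, \partial_{y_n} W_{A^T})$ witnessing the Berglund-H\"ubsch property of $A^T$; the first summand contracts each $\theta_i$ by the scalar $\pi^{-1}(\lambda A^{-T})_i$. Splitting the index set $\{1,\ldots,n\}$ according to whether $(\lambda A^{-T})_i$ vanishes reduces the calculation to a tensor product of two standard facts: the Koszul complex of a regular sequence is concentrated one-dimensionally in top degree, and a complex whose differential is contraction by a unit scalar is acyclic. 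This yields at most a single class per block, supported in the bidegree predicted by $q^\vee(\lambda, I)$, on which the induced $d_A$ has no choice but to vanish. The delicate point is selecting a minimal representative of $(\gamma, \lambda)$ so that the overconvergence condition defining $\widetilde\cB_A$ and the quotient relation $\gamma A^{-1}\lambda^T > 0$ interact cleanly; this is where the hypothesis $\det(A) \mid p-1$ earns its keep.

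For (4), the unprojected duality is given by the explicit swap $x^\gamma y^\lambda \theta^I \longmapsto y^\gamma x^\lambda \theta^{\fatOne - I}$ (up to a sign compensating for the reversal of the $\theta$-ordering), combined with exchanging $A$ and $A^T$. The defining conditions $\lambda A^{-T} \ge 0$ and $\gamma A^{-1}\lambda^T > 0$ are self-dual under the simultaneous swap $(\gamma, \lambda) \leftrightarrow (\lambda, \gamma)$ and $A \leftrightarrow A^T$, so the map descends to the quotient as a bijection $\cB_A(\lambda, \gamma) \to \cB_{A^T}(\gamma, \lambda)$. A direct computation then confirms that it intertwines $d_A + d_A^\vee$ with $d_{A^T} + d_{A^T}^\vee$, giving the stated isomorphism on cohomology.
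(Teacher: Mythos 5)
The paper itself offers no proof of Proposition~\ref{prop:2.8}: it is imported wholesale from \cite{AP}, so there is no in-text argument to measure yours against. Parts (1) and (2) of your plan are fine and essentially forced. The problems are in (3) and (4).

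In (3), the claim that computing $H(\cB_A(\gamma,\lambda),d_A^\vee)$ first ``yields at most a single class per block, on which the induced $d_A$ has no choice but to vanish'' cannot work, because $d_A^\vee$ does not touch the $x$-variables: each block contains the infinite family of monomials $x^{\gamma+kA}y^{\lambda'}\theta^I$, and for $\lambda'=0$ none of these is killed by the quotient relation $\gamma A^{-1}\lambda^T>0$, so the $d_A^\vee$-cohomology of a block is infinite dimensional. The collapse of the $x$-directions is carried out by $d_A$, through relations of the form $x^{\gamma+e_jA}y^\lambda\theta^I=-\pi^{-1}(\gamma A^{-1})_jx^\gamma y^\lambda\theta^I$ --- exactly the reduction formula the paper uses later in the proof of Proposition~\ref{prop:3.4} --- so the induced $d_A$ on your first page is emphatically nonzero and does the essential work. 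Two further inaccuracies: the second summand of $d_A^\vee$ multiplies by the monomials $y^{e_iA^T}$ of $W_{A^T}$, not by the partial derivatives $\partial_{y_i}W_{A^T}$, and the monomials alone do not form a regular sequence (for the chain $y_1^{a_1}y_2+y_2^{a_2}$ their common zero locus has codimension one); and the coefficient $(\lambda A^{-T})_i$ in the first summand varies with the exponent $\lambda'$ inside the block, so it is not literally contraction by a fixed unit scalar. The correct Koszul input is the combination of both summands, i.e.\ the sequence $\pi^{-1}(\lambda A^{-T})_i+y^{e_iA^T}$, whose regularity is what the invertibility of $W_{A^T}$ actually buys you.

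In (4), the naive swap $x^\gamma y^\lambda\theta^I\mapsto y^\gamma x^\lambda\theta^{\fatOne-I}$ is not even a map of the underlying spaces: the defining condition of $\widetilde\cB_A$ is one-sided ($\lambda A^{-T}\ge 0$ is imposed on the $y$-exponents but $\gamma A^{-1}\ge 0$ is not imposed on the $x$-exponents), so the conditions are not self-dual under your swap. Moreover $d_A$ carries coefficients $\gamma_i=\sum_j(\gamma A^{-1})_jA_{ji}$ and $\pi\sum_jA_{ji}x^{e_jA}$, whereas $d_{A^T}^\vee$ carries $\pi^{-1}(\gamma A^{-1})_i$ and the single monomial $y^{e_iA}$; these differ by the invertible matrix $A$ and a power of $\pi$, so a relabeling of variables with a sign cannot intertwine them --- at minimum a linear change of basis in the $\theta$'s and a $\pi$-rescaling are required, and consistently with how the statement is phrased the duality should only be expected at the level of cohomology, not of cochains.
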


\begin{definition}\label{def:2.9}
Let $A$ be a Berglund-H\"ubsch matrix over $\mathbb F_p$ and let $G$ be a subgroup of $G_A$. The {\it transpose} of $G$ is the subgroup $G^T$ of $G_{A^T}$ defined by requiring that an element $\gamma\in G_{A^T}$ belongs to $G^T$ if and only if $\gamma A^{-1} \lambda^T \in \mathbb Z$ for all $\lambda\in G$.
\end{definition}

\begin{example}
The matrix $A=\left(\begin{smallmatrix} 2 & 1 & 0\\ 0 & 3 & 0\\ 0& 0 & 3\end{smallmatrix}\right)$ is Berglund-H\"ubsch over $\mathbb F_{19}$. $G_A$ is of order $18$ generated by $e_2$ and $e_3$. $G_{A^T}$ is also of order $18$ generated by $e_1$ and $e_3$. If $G$ is the cyclic subgroup of $G_A$ order $3$ generated by $e_3$, then $G^T$ is the cyclic subgroup of $G_{A^T}$ order $6$ generated by $e_1$.
\end{example}

\begin{definition}
Let $A$ be a Berglund-H\"ubsch matrix over $\mathbb F_p$. Given $\lambda\in G_A$, let $\mathcal B_A^\lambda$ be the direct sum of all $\mathcal B_A(\gamma,\lambda)$ over all $\gamma\in G_{A^T}$.
\end{definition}

\begin{remark}\label{rem:2.12}
Let $A$ be a Berglund-H\"ubsch matrix over $\mathbb F_p$. The dimension of the total cohomology group $H(\mathcal B_A^0,d_A+d_A^\vee)$ is equal to the dimension of the Milnor ring $\mathbb F_p[x]/dW_A$. For instance, if $A$ is diagonal, then the total cohomology group has dimension $(A_{11}-1)\cdots(A_{nn}-1)$.
\end{remark}

\begin{remark}\cite{AP}
Let $A$ be an $n\times n$ Berglund-H\"ubsch matrix over $\mathbb F_p$. Given $\lambda\in G_A$, let $A^\lambda$ be the matrix such that $W_{A^\lambda}(x)$ is obtained from $W_A(x)$ by setting $x_i=0$ whenever $(\lambda A^{-T})_i\in \mathbb Q\setminus \mathbb Z$. Then $A^\lambda$ is Berglund-H\"ubsch over $\mathbb F_p$ and $H(\mathcal B_{A^\lambda}^0,d_{A^\lambda}+d_{A^\lambda}^\vee)\cong H(\mathcal B_A^\lambda,d_A+d_A^\vee)$.
\end{remark}

\begin{definition}
Let $A$ be Berglund-H\"ubsch matrix over $\mathbb F_p$ and let $G$ be a subgroup of $G_A$. Let $\mathcal B_A^G$ be the direct sum of all $\mathcal B_A(\gamma,\lambda)$ over all $(\gamma,\lambda)\in G^T\times G$. The {\it projected double complex of $A$ with respect to $G$} is $(\mathcal B_A^G,d_A,d_A^\vee)$.
\end{definition}

\begin{remark}
It follows from Definition \ref{def:2.9} that $\mathcal B_A^G$ can be identified with the $G$-invariant part of  $\bigoplus_{\lambda\in G} \mathcal B_A^\lambda$.
\end{remark}

\begin{remark}
It follows from Proposition \ref{prop:2.8} that the cohomology of $(\mathcal B_A^G,d_A+d_A^\vee)$ is isomorphic to that of $(\mathcal B_{A^T}^{G^T},d_{A^T}+d_{A^T}^\vee)$.
\end{remark}

\begin{definition}\label{def:2.17}
Let $A$ be an $n\times n$ Berglund-H\"ubsch matrix over $\mathbb F_p$. Assume the representative in the equivalence class of $\lambda\in G_A$ modulo $\mathbb Z^nA^T$ is chosen so that $0\le (\lambda A^{-T})_i<1$ for every $i\in \{1,\ldots,n\}$. The {\it age} of $\lambda$ is ${\rm age}(\lambda)=\sum_{i=1}^n(\lambda A^{-T})_i$. Similarly, the {\it dual age} of $\gamma\in G_{A^T}$ is defined to be ${\rm age}^\vee(\gamma)=\sum_{i=1}^n(\gamma A^{-1})_i$. We also define $\dim(\lambda)$ to be the number of indices $i$ for which $(\lambda A^{-T})_i\in \mathbb Z$.
\end{definition}

\begin{remark}\label{rem:2.18}
The bicomplex $(\mathcal B_A,d_A,d_A^\vee)$ was introduced in \cite{AP} as a $p$-adic D-module counterpart of the $\mathbb C$-linear bicomplex introduced by Borisov in \cite{B1} (which can be formally recovered by setting $\pi=0$). As pointed out in Remark \ref{rem:2.12} the two bicomplexes have the same total cohomology. Since Borisov's bicomplex computes the same FJRW groups as in \cite{Kra}, we see that (up to a shift in bi-degree) the same holds for $(\mathcal B_A^G,d_A,d_A^\vee)$. More precisely, let $A$ be Berglund-H\"ubsch over $\mathbb F_p$. Then
\begin{equation}
\dim H^{r,s}_{{\rm FJRW}}(W_A,G)=\sum_{\gamma,\lambda} \dim H(\mathcal B_A(\gamma,\lambda),d_A+d_A^\vee)\,,
\end{equation}
where the sum is over all $\lambda\in G$ and $\gamma\in G^T$ such that $s={\rm age}(\lambda)+{\rm age}^\vee(\gamma)-1$ and $r=\dim(\lambda)+{\rm age}(\lambda)-{\rm age}^\vee(\gamma)-1$.
\end{remark}

\begin{remark}
In general, the age (and thus the FJRW bigrading) is not necessarily an integer valued. However, if both $G$ and $G^T$ contain $J=e_1+\ldots+e_n$ (which, in particular, implies the Calabi-Yau condition $JA^{-T}J^T\in \mathbb Z$), then the age of any $\lambda\in G$ and $\gamma\in G^T$ is an integer.
\end{remark}

\begin{remark}\label{rem:2.20}
Let $A$ be an $n\times n$ Berglund-H\"ubsch matrix over $\mathbb F_p$ and let $G$ be a subgroup of $G_A$ such that $J\in G\cap G^T$. Let $X_A(\mathbb C)$ be the hypersurface in complex weighted projective space with weights $w_i=m(JA^{-T})_i$, where $m$ is the smallest integer such that each $w_i$ is an integer, cut out by $W_A(x)=0$. We observe that the subgroup of $G$ generated by $J$ acts trivially on $X_A(\mathbb C)$. Then it follows from \cite{ChiodoRuan11} that the Chen-Ruan Hodge numbers of the orbifold $Y_{A,G}(\mathbb C)=[X_A/(G/\langle J\rangle)](\mathbb C)$ are given by
\begin{equation}\label{eq:2.2}
H^{r,s}_{{\rm CR}}(Y_{A,G}(\mathbb C))= \sum_{\gamma,\lambda} \dim H(\mathcal B_A(\gamma,\lambda),d_A+d_A^\vee)\,,
\end{equation}
where the sum is over all $\lambda\in G$ and $\gamma\in G^T$ such that $s={\rm age}(\lambda)+{\rm age}^\vee(\gamma)-1$ and $r=\dim(\lambda)+{\rm age}(\lambda)-{\rm age}^\vee(\gamma)-1$. Assume furthermore that $JA^{-T}J^T=1$ (strict Calabi-Yau condition) and that $Y_{A,G}(\mathbb C)$ admits a crepant resolution $Z_{A,G}(\mathbb C)$. Then the RHS of \eqref{eq:2.2} calculates the dimension of the (ordinary) Hodge group of $Z_{A,G}(\mathbb C)$ in bidegree $(r,s)$.
\end{remark}

\section{Frobenius Maps}

\begin{definition}
Let $A$ be an $n\times n$ Berglund-H\"ubsch matrix over $\mathbb F_p$. The {\it Frobenius map} $\Fr_A$ is the linear endomorphism of $\mathcal B_A$ that to each monomial $x^\gamma y^\lambda \theta^I \in \cB_A$ associates the overconvergent power series
\begin{equation}\label{eq:3.1}
\Fr_A(x^\gamma y^\lambda \theta^I)=p^{-q(\lambda,I)} \sum c_k \pi^{|k|} x^{(\gamma+kA)/p}y^\lambda \theta^I\,,
\end{equation}
where the sum is over all $k=(k_1,\ldots,k_n)\in \N^n$ such that $\gamma+kA\in p\N^n$ and the coefficients $c_k$ are defined recursively by setting $c_0=1$, $c_k=0$ if $k_j<0$ for some $j$, and
\[
k_jc_k=\pi c_{k-e_j}+\pi^p c_{k-pe_j}
\]
otherwise. Similarly, the {\it dual Frobenius map} $\Fr^\vee_A$ is defined by the formula
\[
\Fr^\vee_A (x^\gamma y^\lambda \theta^I)= p^{-q^\vee(\lambda,I)}\sum c_k \pi^{|k|} x^\gamma y^{(\lambda+kA^T)/p} \theta^I\,,
\]
where now $k$ ranges over all elements of $\N^n$ such that $\lambda+kA^T\in p\N^n$.
\end{definition}

\begin{remark}
${\rm Fr}_A$ and ${\rm Fr}_A^\vee$ are, respectively, the inverses of the operators denoted by ${\rm Fr}_A'$ and ${\rm Fr}_A''$ in \cite{AP}. In \cite{AP}, the composition ${\rm Fr}_A''{\rm Fr_A}'$ was shown to commute (up to an overall power) with Berglund-H\"ubsch duality. Here we use inverse operators for the purpose of calculating the number of points.
\end{remark}

\begin{lemma}
Let $A$ be an $n\times n$ Berglund-H\"ubsch matrix over $\mathbb F_p$. Then $\Fr_A$ and $\Fr^\vee_A$ are endomorphisms of the bicomplex $(\cB_A, d_A,d_A^\vee)$.
\end{lemma}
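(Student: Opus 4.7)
The plan is to verify that each of $\Fr_A$ and $\Fr_A^\vee$ (i) descends from the ambient space of overconvergent series to the quotient $\cB_A$ and (ii) commutes with both differentials $d_A$ and $d_A^\vee$. The arguments for the two operators are structurally analogous (with the roles of the $x$- and $y$-variables swapped), so I would treat $\Fr_A$ in detail and only sketch the parallel verification for $\Fr_A^\vee$.

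For (i), every term of $\Fr_A(x^\gamma y^\lambda \theta^I) = p^{-q(\lambda,I)}\sum_k c_k \pi^{|k|} x^{(\gamma+kA)/p}y^\lambda \theta^I$ has the same $\lambda$ and $I$ as the input, and the new $x$-exponent $(\gamma+kA)/p$ lies in $\N^n$ by the summation constraint, so the inequality $\lambda A^{-T}\ge 0$ defining $\widetilde\cB_A$ is preserved. The identity $((\gamma+kA)/p)A^{-1}\lambda^T = (\gamma A^{-1}\lambda^T + k\lambda^T)/p$ shows that $\gamma A^{-1}\lambda^T>0$ is preserved, so $\Fr_A$ descends to $\cB_A$. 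Overconvergence follows from the standard $p$-adic bound on the Dwork coefficients $c_k\pi^{|k|}$.

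Because $\Fr_A$ acts only on the $x$-variable while $d_A^\vee$ acts only on $y$ and $\theta$, the commutation $\Fr_A\, d_A^\vee = d_A^\vee\, \Fr_A$ reduces to checking that the prefactor $p^{-q(\lambda,I)}$ is invariant under the substitutions $(\lambda, I)\mapsto(\lambda, I-e_i)$ and $(\lambda, I)\mapsto(\lambda+e_iA^T, I-e_i)$ induced by the two terms of $d_A^\vee$. In each case the drop of $|I|$ by $1$ is offset by a rise of $1$ in $q^\vee$ at the index $i$ (where $(\lambda A^{-T})_i$ is, or becomes, positive while $I_i$ drops from $1$ to $0$), so $q = |I| + q^\vee$ is fixed.

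The main step is $\Fr_A\, d_A = d_A\, \Fr_A$. The cleanest route is to invoke the remark identifying $\Fr_A$ with the inverse of the operator $\Fr_A'$ of \cite{AP}, which is a chain map of the bicomplex; since the inverse of a chain map is a chain map, the claim follows. Alternatively, one verifies the identity directly by expanding both sides on a monomial $x^\gamma y^\lambda \theta^I$, reindexing the inner sums by $k\mapsto k+e_j$ (resp.\ $k\mapsto k+pe_j$), and applying the Dwork recursion $k_j c_k = \pi c_{k-e_j}+\pi^p c_{k-pe_j}$. The main subtlety, which I expect to be the principal obstacle in the direct approach, is that the commutation holds only modulo the ideal defining $\cB_A$: any nonzero monomial of $\cB_A$ satisfies $\gamma A^{-1}\lambda^T = 0$, which forces $(\lambda A^{-T})_i = 0$ whenever $\gamma_i>0$, and this is precisely what makes the shift $q(\lambda, I+e_i) = q(\lambda, I)+1$ hold when the first piece of $d_A$ contributes, thereby aligning the $p$-adic prefactors on the two sides.
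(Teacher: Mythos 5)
Your overall strategy matches the paper's: the commutation of $\Fr_A$ with $d_A^\vee$ is disposed of by observing that the two operators touch disjoint sets of even variables and that both terms of $d_A^\vee$ preserve the eigenvalue $q=|I|+q^\vee$ (your index-by-index check of this is correct and is exactly the paper's one-line justification), while the commutation with $d_A$ comes down to the Dwork recursion $k_jc_k=\pi c_{k-e_j}+\pi^pc_{k-pe_j}$ together with the observation that the prefactors $p^{-q(\lambda,I)}$ align only because, modulo the ideal defining $\cB_A$, every surviving output monomial has $(\lambda A^{-T})_i=0$ at the index $i$ where $\theta_i$ is created, so that $d_A$ genuinely has degree $1$ for $Q_A$. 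You have correctly isolated that subtlety, and your added check that $\Fr_A$ descends to the quotient $\cB_A$ (the ideal $\gamma A^{-1}\lambda^T>0$ is preserved since $k\lambda^T\ge 0$) is a point the paper leaves implicit.

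The one genuine problem is your ``cleanest route'': deducing the chain-map property from the remark that $\Fr_A$ is the inverse of the operator $\Fr_A'$ of the earlier work. That inference requires $\Fr_A'$ to be an invertible endomorphism \emph{at the cochain level}, which is neither asserted in the remark nor plausible without work: Dwork-type Frobenii built from a ``divide exponents by $p$'' step are not obviously bijective on spaces of overconvergent series, and the inverse relation is naturally a cohomology-level statement. So Route A as stated is a gap, not a proof. Your fallback Route B is sound and is in substance what the paper does; the only organizational difference is that the paper factors $\Fr_A=\Fr_1\circ\Fr_2$ and interposes the auxiliary differential $d_{pA}$, proving $d_{pA}\circ\Fr_2=\Fr_2\circ d_A$ by a commutator computation with the multiplication operator (where the Dwork recursion enters) and $d_A\circ\Fr_1=\Fr_1\circ d_{pA}$ by the degree-$1$ bookkeeping of the $p^{-q}$ prefactors. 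This factorization buys a clean separation of the reindexing step from the prefactor step; your single direct expansion must handle both simultaneously, which is workable but messier. If you commit to Route B and carry out the reindexing explicitly (also verifying the analogous degree-$1$ statement for the second piece of $d_A$, where nonvanishing in $\cB_A$ of $x^{\gamma+e_jA}y^\lambda\theta^{I+e_i}$ again forces $(\lambda A^{-T})_i=0$ because $(\gamma+e_jA)_i>0$), the proof is complete.
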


\begin{proof} $d_A$ commutes with ${\rm Fr}^\vee_A$ because they act on different sets of even variables and $d_A$ preserves the $Q_A^\vee$ grading. Similarly, $d_A^\vee$ commutes with ${\rm Fr}_A$. Consider the factorization ${\rm Fr}_A={\rm Fr_1}\circ{\rm Fr_2}$ where
\[
{\rm Fr_2}(x^\gamma y^\lambda \theta^I)=\sum_k c_k \pi^{|k|}x^{\gamma+kA}y^\lambda \theta^I
\]
and
\[
{\rm Fr_1}(x^\gamma y^\lambda \theta^I)=
\begin{cases}
p^{-q(\lambda,I)} x^{\gamma/p}y^\lambda \theta^I &\textrm{ if } \gamma\in p\mathbb N^n\\
0 & \textrm{otherwise}\,.
\end{cases}
\]
To show that ${\rm Fr}_A$ commutes with $d_A$ it suffices to show that $d_{pA}\circ {\rm Fr}_2={\rm Fr}_2\circ d_A$ and $d_A\circ {\rm Fr}_1={\rm Fr}_1 \circ d_{pA}$. $d_A$ decomposes as the sum of an operator $T$ (which does not depend on $A$) that acts by derivation on the even variables and an operator $\delta_A$ which multiplies the argument by a polynomial. Since ${\rm Fr}_2$ multiplies the argument by an overconvergent power series we have
\begin{align}
[T,{\rm Fr}_2]&=\sum_{k}\sum_i c_k\pi^{|k|}(kA)_ix^{kA}\label{eq:3.2}\\
&=\sum_k\sum_{i,j} c_k \pi^{|k|}k_jA_{ji}x^{kA}\label{eq:3.3}\\
&=\pi\sum_k\sum_{i,j}c_k\pi^{|k|}A_{ji}x^{kA+e_jA}-\pi\sum_k\sum_{i,j}c_k\pi^{|k|}(pA_{ji})x^{kA+e_j(pA)}\label{eq:3.4}\\
&={\rm Fr}_2\circ\delta_A-\delta_{pA}\circ{\rm Fr}_2\,,
\end{align}
where the RHS of \eqref{eq:3.2}-\eqref{eq:3.4} denote multiplication by the corresponding overconvergent power series in $x$. Here we use the notation $|k|=k_1+\cdots+k_n$ for $k=(k_1,\ldots,k_n)$. This implies $d_{pA}\circ {\rm Fr}_2={\rm Fr}_2\circ d_A$. Suppose that $\gamma\notin p\mathbb N^n$. Then $\gamma+e_jpA\notin p\mathbb N^n$ and thus
\[
{\rm Fr}_1\circ d_{pA}(x^\gamma y^\lambda \theta^I)=0=d_A\circ {\rm Fr}_1(x^\gamma y^\lambda \theta^I)\,.
\]
On the other hand, if $\gamma\in p\mathbb N^n$, then
\begin{align}
d_A\circ {\rm Fr}_1(x^\gamma y^\lambda \theta^I)&= p^{-q(\lambda,I)-1}\left(\sum_{i=1}^n\gamma_i x^{\gamma/p}y^\lambda \theta^{I+e_i}+\pi \sum_{i,j}pA_{j,i} x^{\gamma/p+e_jA}y^\lambda \theta^{I+e_i}\right)\\
&=\sum_{i=1}^n p^{-q(\lambda,I+e_i)}\gamma_i x^{\gamma/p}y^\lambda \theta^{I+e_i}+\pi \sum_{i,j} p^{-q(\lambda,I+e_i)}pA_{j,i}y^\lambda \theta^{I+e_i}\\
&={\rm Fr}_1\circ d_{pA}(x^\gamma y^\lambda \theta^I)\,,
\end{align}
where we used the fact that $d_{pA}$ is of degree $1$ with respect to the $Q_A$ grading. This proves that ${\rm Fr}_A$ is an endomorphism of the bicomplex $(\mathcal B,d_A,d^\vee_A)$. The remaining statement, namely that $d_A^\vee$ commutes with ${\rm Fr}^\vee_A$\,, is proved in a similar way. First we introduce the factorization ${\rm Fr}^\vee_A={\rm Fr}_1^\vee \circ {\rm Fr}_2^\vee$, where
\[
{\rm Fr}_1^\vee(x^\gamma y^\lambda \theta^I)=\begin{cases}
x^{\gamma}y^{\lambda/p} \theta^I &\textrm{if } \lambda\in p\mathbb N^n,\\
0 & \textrm{otherwise},
\end{cases}
\]
and
\[
{\rm Fr}^\vee_2(x^\gamma y^\lambda \theta^I)=p^{-q^\vee(\lambda,I)}\sum_k c_k \pi^{|k|}x^\gamma y^{\lambda +kA^T}\theta^I\,.
\]
If $\lambda \notin p \mathbb N^n$, then $\lambda+e_ipA^T\notin p\mathbb N^n$ and thus
\[
d_A^\vee \circ {\rm Fr}_1^\vee (x^\gamma y^\lambda \theta^I)= 0 = {\rm Fr_1}^\vee \circ d_{pA}^\vee(x^\gamma y^\lambda \theta^I)\,.
\]
On the other hand, if $\lambda\in p\mathbb N^n$ then
\begin{align}
d_A^\vee \circ {\rm Fr}_1^\vee (x^\gamma y^\lambda \theta^I)& = (\pi p)^{-1} \sum_i (\lambda A^{-T})_i x^\gamma y^{\lambda/p} \partial_{\theta_i}\theta^I+\sum_i x^\gamma y^{\lambda/p+e_iA^T} \partial_{\theta_i}\theta^I\\
&={\rm Fr}_1^\vee \circ d_{pA}^\vee (x^\gamma y^\lambda \theta^I)\,.
\end{align}
To conclude the proof, we compute
\begin{align}
\pi d_{pA}^\vee \circ {\rm Fr}^\vee_2 (x^\gamma y^\lambda \theta^I)&=p^{-q^\vee(\lambda,I)-1}\sum_k\sum_i c_k \pi^{|k|}(\lambda A^{-T}+k)_i x^\gamma y^{\lambda+kA^T} \partial_{\theta_i}\theta^I\\
&\qquad+p^{-q^\vee(\lambda,I)}\sum_k\sum_i c_k \pi^{|k|}x^\gamma y^{\lambda+kA^T+e_ipA^T}\partial_{\theta_i}\theta^I\\
&= \sum_i\sum_kc_k p^{-q^\vee(\lambda,I-e_i)}\pi^{|k|}(\lambda A^{-T})_i x^\gamma y^{\lambda +kA^T}\partial_{\theta_i}\theta^I\\
&\qquad+\sum_{i}\sum_k p^{-q^\vee(\lambda,I-e_i)}c_k \pi^{|k|}x^\gamma y^{\lambda +kA^T+e_i A^T} \partial_{\theta_i}\theta^I\\
&=\pi{\rm Fr}^\vee_2\circ d_A^\vee (x^\gamma y^\lambda \theta^I)\,,
\end{align}
where we used the definition of the $c_k$ and the fact that $d_{pA}^\vee$ has degree $1$ with respect to the $Q_A^\vee$ grading.
 \end{proof}

Let $\Gamma_p$ denote the $p$-adic gamma function. We refer the reader to \cite{Koblitz84} and \cite{Cohen07} for pedagogical accounts. In what follows, we find it helpful to employ the following notational convention. Given a vector $\alpha=(\alpha_1,\ldots,\alpha_n)\in \mathbb Z_p^n$, we denote by $\Gamma_p(\alpha)$ the product $\Gamma_p(\alpha_1)\cdots \Gamma_p(\alpha_n)$.

\begin{proposition}\label{prop:3.4} Let $A$ be a Berglund-H\"ubsch matrix over $\mathbb F_p$ and let $H(\Fr_A)$, $H(\Fr^\vee_A)$ be operators induced by, respectively, $\Fr_A$ and $\Fr_A^\vee$ on the total cohomology of the bicomplex $(\cB_A,d_A,d_A^\vee)$. Then $H(\Fr_A)$, $H(\Fr^\vee_A)$ act diagonally on the standard monomial basis for the cohomology according to the formulas
\[
H(\Fr_A)[x^\gamma y^\lambda \theta^I]=\pi^{(p-1)|\gamma A^{-1}|}p^{-q(\lambda,I)}\Gamma_p(\gamma A^{-1})[x^\gamma y^\lambda \theta^I]
\]
and
\[
H(\Fr^\vee_A)[x^\gamma y^\lambda \theta^I]=\pi^{(p-1)|\lambda A^{-T}|}p^{-q^\vee(\lambda,I)}\Gamma_p(\lambda A^{-T})[x^\gamma y^\lambda \theta^I]\,.
\]
\end{proposition}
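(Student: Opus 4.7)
The hypothesis $\det(A)\mid p-1$ forces the exponent of $G_{A^T}=\mathbb Z^n/\mathbb Z^n A$ to divide $p-1$, so multiplication by $p$ is the identity on $G_{A^T}$. Whenever $\gamma+kA\in p\mathbb N^n$ this yields the equality of classes $[(\gamma+kA)/p]=[\gamma]$ in $G_{A^T}$, and consequently $\Fr_A$ preserves the decomposition $\cB_A=\bigoplus\cB_A(\gamma,\lambda)$ from Proposition~\ref{prop:2.8}(2). Combined with Proposition~\ref{prop:2.8}(3), which bounds the total cohomology of each sub-bicomplex by one, this forces $H(\Fr_A)$ to act diagonally on the standard monomial basis. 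The same reasoning applied to $G_A$ (whose exponent also divides $p-1$, since $\det(A^T)=\det(A)$) handles $\Fr_A^\vee$.

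\textbf{Step 2: Extracting the eigenvalue.} I would compute the scalar by factoring $\Fr_A=\Fr_1\circ \Fr_2$ exactly as in the proof of the preceding lemma. The operator $\Fr_2$ is multiplication by the overconvergent series $F_A(x)=\sum_k c_k\pi^{|k|}x^{kA}$, which (because the defining recursion is multiplicatively separable, giving $c_k=\prod_i c_{k_i}$) factors as $\prod_{i=1}^n F(\pi x^{e_iA})$ with $F(t)=\exp\bigl(\pi(t-t^p)\bigr)$ the Dwork splitting function. Applying $\Fr_1$ retains only those terms whose $x$-exponent lies in $p\mathbb N^n$ and rescales them by $p^{-q(\lambda,I)}$; one then uses the $d_A$-coboundary relations obtained by evaluating $d_A$ on monomials of the form $x^\alpha y^\lambda\theta^{I-e_i}$ to reduce each surviving class $[x^{(\gamma+kA)/p}y^\lambda\theta^I]$ to an explicit scalar multiple of $[x^\gamma y^\lambda\theta^I]$, assembling the contributions over all admissible $k$ into a single series.

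\textbf{Step 3: Identification with $\Gamma_p$.} The resulting sum is a specialization of a Dwork-type generating function that is controlled by the Gross--Koblitz formula \cite{Koblitz84,Cohen07}: Gross--Koblitz identifies $\pi^a\Gamma_p(a/(p-1))$ with (minus) a standard Gauss sum, and Gauss sums are precisely the relevant specializations of $F$. Because $\det(A)\mid p-1$, the entries of $\gamma A^{-1}$ have denominators dividing $p-1$ and hence fall into the regime where Gross--Koblitz applies coordinatewise; the multiplicative factorization of $F_A$ then produces the product $\pi^{(p-1)|\gamma A^{-1}|}\Gamma_p(\gamma A^{-1})$. Multiplying by the prefactor $p^{-q(\lambda,I)}$ yields the stated eigenvalue for $H(\Fr_A)$, and the eigenvalue for $H(\Fr_A^\vee)$ follows from the same argument after exchanging $A\leftrightarrow A^T$ and $\gamma\leftrightarrow \lambda$.

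\textbf{Anticipated main obstacle.} The principal technical effort concentrates in Step 2: one must check that the $d_A$-coboundary reductions for the monomials $[x^{(\gamma+kA)/p}y^\lambda\theta^I]$ interact with the recursion $k_jc_k=\pi c_{k-e_j}+\pi^p c_{k-pe_j}$ in exactly the right way to collapse the multi-indexed sum into the clean product $\pi^{(p-1)|\gamma A^{-1}|}\Gamma_p(\gamma A^{-1})$. This bookkeeping must be performed while respecting the overconvergence hypothesis on $\cB_A$ (so that all manipulations occur in the correct $p$-adic function spaces) and must be compatible with the multiplicative factorization of $F_A$. Once this is secured, the arithmetic content is supplied by Gross--Koblitz and the rest of the argument is essentially formal.
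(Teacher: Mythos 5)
Your overall strategy coincides with the paper's: establish diagonality, reduce the series \eqref{eq:3.1} to the standard basis via the $d_A$-coboundary relations $x^{\gamma+e_iA}y^\lambda\theta^I=(-\pi)^{-1}(\gamma A^{-1})_i\,x^\gamma y^\lambda\theta^I$, and identify the resulting scalar with a $p$-adic gamma value. Two points, however, need repair. First, in Step 2 the congruence $[(\gamma+kA)/p]=[\gamma]$ in $G_{A^T}$ from Step 1 only gives $(\gamma+kA)/p-\gamma\in\Z^nA$, which is not enough to run the downward reduction: you need $(\gamma+kA)/p=\gamma+sA$ with $s\in\N^n$. This is where $\det(A)\mid p-1$ enters quantitatively: writing $k=[-\gamma A^{-1}]+ps$ with $0\le[-\gamma A^{-1}]_i<p$, the divisibility hypothesis forces $[-\gamma A^{-1}]=(p-1)\gamma A^{-1}$, hence $(\gamma+kA)/p=\gamma+sA$ exactly, and it is also the source of the prefactor $\pi^{(p-1)|\gamma A^{-1}|}$ (from $\pi^{|k|}$ with $|k|=(p-1)|\gamma A^{-1}|+p|s|$, the $\pi^{p|s|}$ part combining with $(-\pi)^{-|s|}$ from the reduction and $\pi^{p-1}=-p$ to give $p^{|s|}$). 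You gesture at this but never extract it, and without it the "explicit scalar multiple" in Step 2 is not available.

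Second, Step 3 invokes the wrong tool. After the reduction the eigenvalue is (up to the prefactors above) the series
\begin{equation*}
\sum_{s\in\N^n} c_{(p-1)\gamma A^{-1}+ps}\,p^{|s|}\,(\gamma A^{-1})_{(s)}\,,
\end{equation*}
where $(z)_{(s)}$ denotes the product of rising factorials. This is a power series in Pochhammer symbols, not a Gauss sum, and it is not a specialization of the splitting function $F$ at Teichm\"uller points; to identify it with a Gauss sum one would first need a Dwork-style trace formula, which you do not supply. What closes the argument directly is the series representation of the $p$-adic gamma function, $\Gamma_p(pz-a)=\sum_{s}c_{a+ps}p^{|s|}(z)_{(s)}$ (Proposition 11.6.15 of \cite{Cohen07}), applied with $z=\gamma A^{-1}$ and $a=(p-1)\gamma A^{-1}$ so that $pz-a=\gamma A^{-1}$. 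The Gross--Koblitz formula is a close cousin of this identity but is not the statement you need here; as written, Step 3 leaves a genuine gap between the reduced cohomological sum and the asserted $\Gamma_p(\gamma A^{-1})$.
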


\begin{proof}
Since each term at the RHS of \eqref{eq:3.1} is labeled by  $\gamma + kA \in p \N^n$, we can write
\[
k = [-\gamma A^{-1}] + ps\,,
\]
where $s \in \N^n$ and $[-\gamma A^{-1}] \in \N^n$ is the vector satisfying $0 \leq [-\gamma A^{-1}]_i < p$ and $[-\gamma A^{-1}]A \equiv -\gamma \mod{p}$.
This lets us rewrite the sum in terms of $s$.
In particular, define $\alpha$ so that
\[
\gamma + kA = \gamma + [-\gamma A^{-1}]A + psA = p\alpha + psA\,,
\]
and note that
\[
(\gamma + kA)/p = \alpha + sA\,.
\]
We can then write
\begin{equation}\label{eq:unreduced-frobenius-expansion-with-alpha}
\Fr_A(x^\gamma y^\lambda \theta^I)=p^{-q(\lambda,I)} \sum_{s \in \N^n} c_{[-\gamma A^{-1}] + ps} \pi^{\left|[-\gamma A^{-1}] + ps \right|} x^{\alpha + sA}y^\lambda \theta^I\,.
\end{equation}
Since $\det (A) \mid p - 1$, then $(p-1) \gamma A^{-1} \in \Z^n$, and in fact
\[
[-\gamma A^{-1}] = (p-1) \gamma A^{-1}\,.
\]
Substituting this identity into $p\alpha = [-\gamma A^{-1}] A + \gamma$ produces
\[
(p-1)\gamma A^{-1} A + \gamma = p\alpha\,,
\]
implying that $\alpha = \gamma$. To obtain the result for $H(\Fr)$, we reduce each summand to the standard monomial basis using the relation
\begin{equation}\label{eq:reduction-formula}
x^{\gamma + sA} y^\lambda \theta^I = (-\pi)^{|s|} \left( \gamma A^{-1} \right)_{(k)} x^\gamma y^\lambda \theta^I\,,
\end{equation}
which holds on the level of cohomology, where
\[
(\gamma A^{-1})_{(k)}=\prod_{i=1}^{n} (\gamma A^{-1})_i((\gamma A^{-1})_{i}+1)\ldots((\gamma A^{-1})_{i}+k_{i}-1)\,.
\]
To see this, note that $d_A$ gives us the relation
\[
x^{\gamma+e_{i}A} y^\lambda \theta^I =(-\pi)^{-1}\left(\gamma A^{-1}\right)_{i}x^{\gamma}y^\lambda \theta^I \,.
\]
Iterating, we see that
  \begin{align*}
    x^{\gamma+k_{i}e_{i}A}y^\lambda \theta^I
    = & (-\pi)^{-1}\left((\gamma+(k_{i}-1)e_{i}A)A^{-1}\right)_{i}\, x^{\gamma+(k_{i}-1)e_{i}A}y^\lambda \theta^I \\
    = & (-\pi)^{-2}\left((\gamma A^{-1})_{i}+(k_{i}-1)\right)\left((\gamma A^{-1})_{i}+(k_{i}-2)\right)\, x^{\gamma+(k_{i}-2)e_{i}A}y^\lambda \theta^I \\
    = & (-\pi)^{-k_{i}}\left((\gamma A^{-1})_{i}+(k_{i}-1)\right)\ldots\left(\gamma A^{-1}\right)_{i}\, x^{\gamma}y^\lambda \theta^I ,
  \end{align*}
yielding the reduction formula~(\ref{eq:reduction-formula}). Using this formula and the identity $\pi^{p-1} = -p$ in expansion~(\ref{eq:unreduced-frobenius-expansion-with-alpha}), we obtain
\[
H(\Fr_A)[ x^\gamma y^\lambda \theta^I ] = p^{-q(\lambda,I)} \pi^{|\gamma A^{-1}|} \sum_{s \in \N^n} c_{[-\gamma A^{-1}] + ps} p^{|s|} (\gamma A^{-1})_{(s)} x^\gamma y^\lambda \theta^I\,.
\]
By Proposition 11.6.15 in \cite{Cohen07}, the $p$-adic gamma function satisfies
\[
\Gamma_p(pz - a) = \sum_{s \in \N^n} c_{a+ps} p^{|s|} (z)_{(s)} \,,
\]
which gives the result for $H(\Fr_A)$ if we use $z = \gamma A^{-1}$ and $a = [-(\gamma A^{-1})]$.
The statement for $H(\Fr^\vee_A)$ is proved similarly.
\end{proof}

\begin{remark}
We are interested in using the ``arithmetic'' Frobenius to count number of points of stacks defined over a finite field in the sense of \cite{Behrend93}. To this end, it is natural to shift the Frobenius by an overall power of $p$ equal to the dimension of $Y_{A,G}$ i.e.\ by $p^{n-2}$. Furthermore, as pointed out in Remark \ref{rem:2.20}, the total cohomology of $(\mathcal B_A^G,d_A,d_A^\vee)$ (in the Calabi-Yau setting) computes the Chen-Ruan cohomology of the orbifold $Y_{A,G}(\mathbb C)$. The action of the (geometric) Frobenius on orbifolds is studied in
 \cite{Rose07} where (accounting for the differing definitions of age) the contribution of the twisted sectors is modified by a factor of $p^{-{\rm age}(\lambda)-1}$. In our arithmetic setting, this amounts to twisting the ${\rm Fr}_A$ by the operator $p^{1+\rm age}$, where ${\rm age}$ is the operator diagonalized in the monomial basis with eigenvalues given by age. Taking both contributions into account, we arrive at an overall twist by $p^{{\rm age}+n-1}$ in order to correctly count number of points on our invertible orbifolds.
\end{remark}

\begin{theorem}\label{thm:3.6}
Let $A$ be an $n\times n$ Berglund-H\"ubsch matrix over $\mathbb F_p$ and let $G$ be a subgroup of $G_A$ such that $J\in G\cap G^T$. Let ${\rm ST}_p(A,G)$ be the supertrace of $p^{{\rm age}+n-1}H({\rm Fr}_A)$ acting on $H(\mathcal B_A^G,d_A+d_A^\vee)$. Then
\begin{equation}\label{eq:3.18}
{\rm ST}_p(A,G)=\sum_{\gamma\in G^T,\lambda\in G} (-1)^{\dim(\lambda)+{\rm age}^\vee(\gamma)} p^{{\rm age}(\lambda)+{\rm age}^\vee(\gamma)-1} \Gamma_p(\gamma A^{-1})\delta(\gamma,\lambda)\, ,
\end{equation}
where $\delta(\gamma,\lambda)\in \{0,1\}$ is the dimension of the total cohomology of $\mathcal B_A(\gamma,\lambda)$.
\end{theorem}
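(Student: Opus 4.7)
The plan is to combine the decomposition from Proposition~\ref{prop:2.8} with the eigenvalue formula from Proposition~\ref{prop:3.4}. By Proposition~\ref{prop:2.8}, $H(\mathcal B_A^G,d_A+d_A^\vee)$ splits as a direct sum, indexed by $(\gamma,\lambda)\in G^T\times G$, of the at-most-one-dimensional spaces $H(\mathcal B_A(\gamma,\lambda),d_A+d_A^\vee)$. The supertrace therefore reduces to a sum of contributions, one for each pair with $\delta(\gamma,\lambda)=1$, and the task becomes: on each summand, identify (i) the supertrace sign, (ii) the eigenvalue of $H(\Fr_A)$, and (iii) the twist by $p^{{\rm age}+n-1}$, and then assemble.

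For (i), since $J\in G\cap G^T$ the quantities ${\rm age}(\lambda)$ and ${\rm age}^\vee(\gamma)$ are integers, and Remark~\ref{rem:2.18} places the class in bidegree $(r,s)$ with $r+s=\dim(\lambda)+2\,{\rm age}(\lambda)-2$; the supertrace sign on the summand is therefore $(-1)^{\dim(\lambda)}$. For (ii), I would use the standard choice of representative with $0\le (\gamma A^{-1})_i<1$, so that $|\gamma A^{-1}|={\rm age}^\vee(\gamma)$, and then apply $\pi^{p-1}=-p$ together with integrality of ${\rm age}^\vee(\gamma)$ to rewrite the prefactor $\pi^{(p-1)|\gamma A^{-1}|}$ appearing in Proposition~\ref{prop:3.4} as $(-1)^{{\rm age}^\vee(\gamma)}\,p^{{\rm age}^\vee(\gamma)}$.

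The key technical step is identifying the value of $q(\lambda,I)$ on a monomial representing the cohomology class. I would argue that any such representative must have $I_i=1$ for every ``fixed'' index $i$, i.e.\ every $i$ with $(\lambda A^{-T})_i\in \Z$ (which contributes $\dim(\lambda)$ to $|I|$), while on the remaining ``twisted'' indices the entries of $I$ are irrelevant to $q$. Granted this, a direct expansion using $q^\vee(\lambda,I)=\#\{i:(\lambda A^{-T})_i\notin\Z,\,I_i=0\}$ yields $q(\lambda,I)=|I|+q^\vee(\lambda,I)=n$. This agrees with the diagonal, untwisted case of Remark~\ref{rem:2.12}, where $\lambda=0$, every index is fixed, and the Milnor-ring basis is recovered with $I=(1,\ldots,1)$ and $|I|=n$.

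Assembling, the eigenvalue of $p^{{\rm age}+n-1}H(\Fr_A)$ on the class $[x^\gamma y^\lambda\theta^I]$ becomes $(-1)^{{\rm age}^\vee(\gamma)}\,p^{{\rm age}(\lambda)+{\rm age}^\vee(\gamma)-1}\,\Gamma_p(\gamma A^{-1})$; multiplying by the supertrace sign $(-1)^{\dim(\lambda)}$ and summing over $(\gamma,\lambda)\in G^T\times G$ weighted by $\delta(\gamma,\lambda)$ gives \eqref{eq:3.18}. The main obstacle is the third step: pinning down the $\theta$-content of the cohomology representative and verifying $q(\lambda,I)=n$. This requires reconciling the FJRW bigrading of Remark~\ref{rem:2.18} with the $(Q_A,Q_A^\vee)$-grading underlying Proposition~\ref{prop:3.4}, and is most cleanly handled by analysing the quotient defining $\mathcal B_A$ together with the opposite effects of $d_A$ and $d_A^\vee$ on $|I|$ and $q^\vee$.
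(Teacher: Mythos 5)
Your proposal is correct and follows essentially the same route as the paper: split the supertrace over the one-dimensional summands $H(\mathcal B_A(\gamma,\lambda))$, read off the sign $(-1)^{\dim(\lambda)}$ from the total degree $\dim(\lambda)+2\,{\rm age}(\lambda)-2$, use Proposition~\ref{prop:3.4} with $|\gamma A^{-1}|={\rm age}^\vee(\gamma)$ and $\pi^{p-1}=-p$, and reduce everything to the identity $q(\lambda,I)=n$ on cohomology representatives. The one step you flag as the obstacle is exactly the step the paper also treats lightly (``direct inspection of the cohomology generators for chains and loops''), and your observation that $q(\lambda,I)=n$ follows once $I_i=1$ on all fixed indices, with the twisted indices contributing to $|I|$ and $q^\vee$ in a complementary way, is a correct and slightly more systematic account of that inspection.
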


\begin{proof}
The total grading obtained from the Hodge bigrading of Remark~\ref{rem:2.18} is $\dim(\lambda)+2{\rm age}(\lambda)-2$ so that 
\begin{equation}\label{eq:3.19}
{\rm ST}_p(A,G)=\sum_{\gamma\in G^T,\lambda\in G}(-1)^{\dim(\lambda)} p^{{\rm age}(\lambda)+n-1}\epsilon(\gamma,\lambda)\delta(\gamma,\lambda)\, ,
\end{equation}
where $\epsilon(\gamma,\lambda)$ is the eigenvalue of $H({\rm Fr}_A)$ on $H(\mathcal B_A(\gamma,\lambda),d_A+d_A^\vee)$ calculated in Proposition \ref{prop:3.4}. Our assumptions imply that $|\gamma A^{-1}|={\rm age}^\vee(\gamma)$ is an integer. Moreover, direct inspection of the cohomology generators for chains and loops shows that $q(\lambda,I)=n$ so that
\begin{equation}\label{eq:3.20}
\epsilon(\gamma,\delta)=(-p)^{{\rm age}^\vee(\gamma)}p^{-n}\Gamma_p(\gamma A^{-1})\,.
\end{equation}
Substituting \eqref{eq:3.20} into \eqref{eq:3.19} easily yields \eqref{eq:3.18}.
\end{proof}

\begin{corollary}\label{cor:3.7}
Let $A$ be an $n\times n$ Berglund-H\"ubsch matrix over $\mathbb F_p$ and let $G$ be a subgroup of $G_A$ such that $J\in G\cap G^T$. Assume further that $JA^{-T}J^T=1$ and $n\ge 3$. Let $X_A(\mathbb F_p)$ be the hypersurface in weighted projective space over $\mathbb F_p$ with weights $w_i=m(JA^{-T})_i$, where $m$ is the smallest integer such that each $w_i$ is an integer, cut out by $W_A(x)=0$. Then ${\rm ST}_p(A,G)$ is congruent modulo $p$ to the number of points of $X_A(\mathbb F_p)$. 
\end{corollary}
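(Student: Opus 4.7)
The plan is to reduce the formula \eqref{eq:3.18} from Theorem~\ref{thm:3.6} modulo $p$ and match the result, term by term, with a character-sum expansion of $\#X_A(\mathbb F_p)$.

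First I would observe that the Calabi--Yau hypothesis $J \in G \cap G^T$ makes both ${\rm age}(\lambda)$ and ${\rm age}^\vee(\gamma)$ non-negative integers, so, together with the fact that $\Gamma_p(\gamma A^{-1}) \in \mathbb Z_p^\times$, each summand of \eqref{eq:3.18} has $p$-adic valuation exactly ${\rm age}(\lambda) + {\rm age}^\vee(\gamma) - 1$. Only three families of pairs $(\gamma, \lambda)$ can therefore contribute modulo $p$: the trivial pair $(0, 0)$; the pairs $(0, \lambda)$ with ${\rm age}(\lambda) = 1$; and the pairs $(\gamma, 0)$ with ${\rm age}^\vee(\gamma) = 1$. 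Using Proposition~\ref{prop:2.8} together with the identification $H(\mathcal B_A^\lambda) \cong H(\mathcal B_{A^\lambda}^0)$, I would make the non-vanishing $\delta(\gamma, \lambda)$'s in each family explicit.

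Next I would compute $\#X_A(\mathbb F_p) \bmod p$ by multiplicative-character orthogonality on $\mathbb F_p^\times$. Because $\det(A) \mid p - 1$, the characters needed to decompose the indicator of $\{W_A = 0\}$ are available, and a standard Weil-style expansion writes $\#X_A(\mathbb F_p)$ as the point count of the ambient weighted projective space (congruent to $1 \bmod p$) plus a sum of Jacobi sums indexed by admissible character tuples. The Gross--Koblitz formula then rewrites each Jacobi sum as $\Gamma_p(\gamma A^{-1})$ times a power of $\pi$, with the labels $\gamma$ running over precisely the age-$\vee$-one classes in $G_{A^T}$ supporting $\delta(\gamma, 0) = 1$.

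The main obstacle is the $(\gamma, \lambda) = (0, 0)$ sector of the reduced sum, which has naive $p$-adic valuation $-1$ and would prevent ${\rm ST}_p(A, G)$ from even lying in $\mathbb Z_p$. I would resolve this by establishing $\delta(0, 0) = 0$: the corresponding FJRW bidegree from Remark~\ref{rem:2.18} is $(r, s) = (n-1, -1)$, and since $(0, 0)$ is the unique pair giving this bidegree, the strict Calabi--Yau hypothesis $JA^{-T}J^T = 1$ combined with $n \ge 3$ places this bidegree outside the Hodge diamond of any crepant resolution of $Y_{A, G}$ (Remark~\ref{rem:2.20}), forcing $\delta(0, 0) = 0$. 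With the $p^{-1}$ term eliminated, the age-$1$ contributions with $\gamma = 0$ collapse to the background $1$ on the point-count side, and the age-$1$ contributions with $\lambda = 0$ match the Gross--Koblitz expansion of the Jacobi-sum part of $\#X_A(\mathbb F_p)$, yielding the asserted congruence.
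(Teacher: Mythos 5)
Your reduction of \eqref{eq:3.18} modulo $p$ is essentially the paper's argument: each summand has valuation ${\rm age}(\lambda)+{\rm age}^\vee(\gamma)-1$, so only the sectors with ${\rm age}(\lambda)+{\rm age}^\vee(\gamma)\le 1$ can survive, and the Hodge-theoretic interpretation of Remark~\ref{rem:2.20} (connectedness of $Z_{A,G}$, $h^{0,0}=h^{n-2,0}=1$, all other $h^{r,0}=0$) pins the surviving terms down to $(0,J)$ and $(J,0)$, giving ${\rm ST}_p(A,G)\equiv 1+(-1)^{n-1}\Gamma_p(JA^{-1})\bmod p$. Your explicit elimination of the $(\gamma,\lambda)=(0,0)$ sector via $\delta(0,0)=0$ (bidegree $s=-1$ lies outside the Hodge diamond) is correct and is a point the paper's proof leaves implicit, so that part is a genuine improvement in rigor.

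The gap is on the point-count side. You invoke ``a standard Weil-style expansion'' of $\#X_A(\mathbb F_p)$ into Jacobi sums indexed by admissible character tuples, to be converted by Gross--Koblitz into the values $\Gamma_p(\gamma A^{-1})$. That expansion is standard only when $W_A$ is diagonal (this is exactly what the paper uses later, in the proofs of Theorems~\ref{thm:4.2} and~\ref{thm:4.6}, where such exact formulas are available). The corollary, however, is stated for an arbitrary Berglund--H\"ubsch matrix, so $W_A$ may be a chain, a loop, or a sum of these; for such coupled monomials $x_i^{a_i}x_{i+1}$ no off-the-shelf Jacobi-sum formula for $\#X_A(\mathbb F_p)$ exists, and producing one is essentially the hard content of the exact (not mod-$p$) statement. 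Since only a congruence modulo $p$ is needed, the correct tool is the elementary Chevalley--Warning-type identity $\#X_A(\mathbb F_p)\equiv 1+\sum_{x\in\mathbb F_p^n}W_A(x)^{p-1}\bmod p$: expanding $W_A(x)^{p-1}$ by the multinomial theorem and summing over $x$, only the term with exponent vector $(p-1)J$ survives, yielding $\#X_A(\mathbb F_p)\equiv 1+(-1)^n\binom{p-1}{(p-1)JA^{-1}}\bmod p$ for \emph{every} invertible $W_A$. One then still has to reconcile this multinomial coefficient with $(-1)^{n-1}\Gamma_p(JA^{-1})$; the paper does this with the reflection formula \eqref{eq:3.22bis} and the product representation of $\Gamma_p$, a step your proposal has no substitute for once the Jacobi-sum route is unavailable. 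Replacing your character-sum computation with this elementary expansion (plus the reflection-formula identity) closes the gap.
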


\begin{proof}
Modulo $p$, the only non-zero contributions to \eqref{eq:3.18} are labeled by pairs $(\gamma,\lambda)$ that are either of the form $(\gamma,0)$ with ${\rm age}^\vee(\gamma)=1$ or of the form $(0,\lambda)$ with ${\rm age}(\lambda)=1$. Since $n\ge 3$ and $JA^{-T}J^T=1$, then $Z_{A,G}(\mathbb C)$ is a connected Calabi-Yau variety and it follows from Remark \ref{rem:2.20} that there are precisely two such contributions: from $(0,J)$ (corresponding to a generator of $H^{0,0}(Z_{A,G}(\mathbb C))$ and from $(J, 0)$ (corresponding to the class of the Calabi-Yau form of $Z_{A,G}(\mathbb C))$. Hence,
\begin{equation}\label{eq:3.22}
{\rm ST}_p(A,G)\equiv 1 + (-1)^{n-1} \Gamma_p(J A^{-1}) \mod p\,.
\end{equation}
Using the reflection formula for the $p$-adic gamma function in the form 
\begin{equation}\label{eq:3.22bis}
\Gamma_p\left(\frac{a}{d}\right)\Gamma_p\left(1-\frac{a}{d}\right)=(-1)^{p-\frac{a(p-1)}{d}}\,,
\end{equation}
valid whenever $d$ is an integer dividing $p-1$ and $a\in \{1,\ldots,d-1\}$, and the fact that $\Gamma_p(x)$ is congruent mod $p$ to $\Gamma_p(y)$ whenever $x$ and $y$ are (See e.g.\ Sec. 2 Ch. 4 of \cite{Koblitz84}), we obtain
\begin{align*}
1 &=(-1)^{p-(p-1)(J A^{-1})_i}\Gamma_p((J A^{-1})_i)\Gamma_p(1-(J A^{-1})_i)\\
 & \equiv (-1)^{p-(p-1)(J A^{-1})_i}\Gamma_p((J A^{-1})_i)\Gamma_p(p-(p-1)(1-(J A^{-1})_i)) \, \mod p\,.
\end{align*} 
Since $(p-1)J A^{-1}$ is an integer vector, we see from the product representation (ibid.\ ) of $\Gamma_p(p-(p-1)(1-(J A^{-1})_i))$ that
\begin{equation}\label{eq:3.23}
1\equiv \Gamma_p((J A^{-1})_i)((p-1)(J A^{-1})_i)!\, \mod p\, .
\end{equation}
Solving \eqref{eq:3.23} for $\Gamma_p((J A^{-1})_i)$ and substituting into
 into \eqref{eq:3.22}, we arrive at
\begin{equation}
{\rm ST}_p(A,G)\equiv 1 + (-1)^n \binom{p-1}{(p-1)J A^{-1}} \mod p\,,
\end{equation}
where we use the notation $\binom{r}{\beta}=\binom{r}{\beta_1,\ldots,\beta_n}$ for any integer vector $\beta=(\beta_1,\ldots,\beta_n)$ such that $\beta_1+\ldots+\beta_n=r$ to denote multinomial coefficients. On the other hand (a similar calculation can be found e.g.\ in \cite{P}),
\begin{align*}
\#X_A(\mathbb F_p) &\equiv
1+\sum_{x\in \mathbb F_p^n} (W_A(x))^{p-1} \, \mod p\\
&\equiv 1+ \sum_{x\in \mathbb F_p^n} \sum_{|c|=p-1} \binom{p-1}{c} x^{cA}\, \mod p\\
&\equiv 1+ \binom{p-1}{(p-1)JA^{-1}} \sum_{x\in \mathbb F_n} x^{(p-1)J}\, \mod p\\
&\equiv 1+ (-1)^n\binom{p-1}{(p-1)JA^{-1}}\, \mod p\,,
\end{align*}
where for $N$ denoting the number solutions to $W_A(x) = 0$ in $\F_p^n$, we have used the relation $(p-1) \#X_A(\F_p) = N - 1$ along with $N \equiv - \sum_{x \in \F_p^n} W_A(x)^{p-1} \mod p$.
\end{proof}

\section{An Arithmetic Conjecture}

The results in this section apply to Berglund-H\"ubsch matrices in the sense of Definition \ref{def:2.1}. In particular, the restrictive requirement that the determinant divides $p-1$ is assumed.

\begin{conjecture}\label{con:4.1}
Let $A$ be an $n\times n$ Berglund-H\"ubsch matrix over $\mathbb F_p$ let $n\ge 3$, and let $G$ be a subgroup of $G_A$ such that $J\in G\cap G^T$. Assume that $JA^{-T}J^T=1$ and let $X_A$ be the hypersurface in the weighted projective space over $\mathbb F_p$ with weights $w_i=m(JA^{-T})_i$, where $m$ is the smallest integer such that each $w_i$ is an integer, cut out by $W_A(x)=0$. If $ Y_{A,G}$ is the orbifold $[ X_A/(G/\langle J\rangle)]$, then the number of points of any crepant resolution $Z_{A,G}(\mathbb F_p)$ of $ Y_{A,G}(\mathbb F_p)$ is equal to ${\rm ST}_p(A,G)$.
 \end{conjecture}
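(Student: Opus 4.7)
The plan is to reduce Conjecture~\ref{con:4.1} to a comparison between three cohomology theories carrying compatible Frobenius actions: (a) the $\ell$-adic \'etale cohomology of the crepant resolution $Z_{A,G}$ together with its arithmetic Frobenius, which computes $\#Z_{A,G}(\mathbb{F}_p)$ via the Grothendieck--Lefschetz trace formula; (b) the Chen--Ruan orbifold cohomology of $Y_{A,G}$ with the age-twisted arithmetic Frobenius of \cite{Rose07}; and (c) the total cohomology of the deformed Borisov bicomplex $(\mathcal{B}_A^G, d_A + d_A^\vee)$ equipped with the operator $p^{{\rm age}+n-1} H(\Fr_A)$. The link between (a) and (b) is an arithmetic crepant-resolution/McKay correspondence, while the link between (b) and (c) is an arithmetic refinement of Remark~\ref{rem:2.20}.

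My first step would be to establish the compatibility between (b) and (c). Proposition~\ref{prop:3.4} already diagonalizes $H(\Fr_A)$ in the monomial basis, so the remaining task is to check sector-by-sector that the eigenvalue $p^{-q(\lambda,I)} \pi^{(p-1)|\gamma A^{-1}|} \Gamma_p(\gamma A^{-1})$ matches, via the Gross--Koblitz formula, the Frobenius eigenvalue on the twisted sector $(\gamma,\lambda)$ of the orbifold, which should be expressible as a Gauss sum attached to the fixed locus of the cyclic element labeling the sector. The twist by $p^{{\rm age}+n-1}$ should then account both for the orbifold convention of \cite{Rose07} and for the normalization needed to pass from the geometric to the arithmetic Frobenius.

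Next, I would turn to the comparison between (a) and (b). For the low-dimensional families highlighted in the introduction (elliptic curves and weighted K3 surfaces), one can presumably verify the identification by direct case analysis, as \cite{Goto} does for minimal K3 resolutions, reducing matters to a finite check once Theorem~\ref{thm:3.6} and Corollary~\ref{cor:3.7} are in hand. In general, one would invoke an arithmetic refinement of the LG/CY correspondence of \cite{ChiodoRuan11}, proving that after base change the Frobenius action on the \'etale cohomology of $Z_{A,G}$ decomposes into sectors indexed by $(\gamma,\lambda) \in G^T \times G$, with each sector's Frobenius eigenvalue matching the one dictated by Theorem~\ref{thm:3.6}.

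The principal obstacle will be this last identification at the level of the crepant resolution: \cite{ChiodoRuan11} provides only the numerical matching of Hodge numbers, and promoting it to a Frobenius-equivariant statement requires controlling the Galois action on the exceptional divisors introduced by the resolution, which is not canonically determined by the orbifold. A natural alternative would be to bypass the resolution entirely by expressing $\#Z_{A,G}(\mathbb{F}_p)$ as a stringy Kontsevich--Denef--Loeser point count of $Y_{A,G}$ and identifying the resulting arc-space integral with the supertrace in (c). A secondary technical difficulty, already visible in the proof of Corollary~\ref{cor:3.7}, is that Gross--Koblitz converts $p$-adic gamma products into Gauss sums only up to explicit powers of $\pi$; keeping track of these powers together with the signs $(-1)^{\dim(\lambda)+{\rm age}^\vee(\gamma)}$ across all sectors is where most of the bookkeeping of the proof will be concentrated.
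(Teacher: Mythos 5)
The statement you are trying to prove is labeled a \emph{conjecture} in the paper, and the paper does not prove it in general; it only offers evidence. Specifically, Corollary~\ref{cor:3.7} verifies the count modulo $p$, and Theorems~\ref{thm:4.2}, the $n=3$ theorem, and Theorem~\ref{thm:4.6} verify the conjecture for diagonal hypersurfaces with pairwise coprime weights, for all Berglund--H\"ubsch elliptic curves, and for all weighted diagonal K3 surfaces respectively. So there is no ``paper's own proof'' against which your argument can be matched; any complete proof you produced would go beyond what the authors claim.

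Your proposal is a research program rather than a proof, and you correctly identify where it breaks down: the passage from the Chen--Ruan/Borisov side to the \'etale cohomology of an actual crepant resolution requires a Frobenius-equivariant (arithmetic) version of the McKay/LG--CY correspondence, and \cite{ChiodoRuan11} only gives an equality of Hodge numbers, not a Galois-equivariant isomorphism. Nothing in the paper supplies that step either; this is precisely why the statement remains conjectural. It is worth noting that in the cases the paper does settle, the authors do not follow your sector-by-sector comparison of Frobenius eigenvalues on orbifold cohomology. Instead they compute both sides explicitly: the supertrace ${\rm ST}_p(A,G)$ via Theorem~\ref{thm:3.6} and the $p$-adic gamma function, and the point count via Weil's Gauss-sum formulas combined with the Gross--Koblitz identity \eqref{eq:4.4} (for Theorem~\ref{thm:4.2}) or via Goto's explicit enumeration of the $-2$ curves in the minimal resolution (for Theorem~\ref{thm:4.6}). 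Your suggested alternative of bypassing the resolution through a stringy/motivic point count in the spirit of \cite{Yasuda04} is in fact raised by the authors themselves as a possible generalization, but it is left as future work. In short: your outline is a plausible attack and its obstacles are honestly flagged, but as written it does not constitute a proof, and no proof of the general statement exists in the paper.
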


\begin{remark}
Wan's congruence mirror conjecture \cite{Wan05} implies that if $(X, X')$ is a strong mirror pair of Calabi-Yau manifolds defined over $\mathbb F_p$, then $\# X(\mathbb F_p)$ is congruent to $\# X'(\mathbb F_p)$ modulo $p$. As pointed out in \cites{Wan05,WanFu06}, the notion of a strong mirror pair lacks a rigorous general definition. In \cite{MagyarWhitcher18}, a strong mirror pair is defined as a mirror pair for which the congruence mirror conjecture holds. In any case, the congruence mirror conjecture has been proven \cite{W} for Dwork pencils and their mirrors. It is natural to ask, as done in \cites{MagyarWhitcher18, DKSSVW18, Whitcher21, SalernoWhitcher22}, whether the Berglund-H\"ubsch construction \cite{BH} is compatible with the congruence mirror conjecture -- more precisely, whether $\# Z_{A,G}(\mathbb F_p)$ is congruent to $\# Z_{A^T,G^T}(\mathbb F_p)$ modulo $p$. 

Under the assumptions of Corollary \ref{cor:3.7}, the congruence class modulo $p$ of ${\rm ST}_p(A,G)$ is independent of $G$. This means that Conjecture~\ref{con:4.1} implies the congruence mirror conjecture when $A$ is symmetric. This is consistent with the observation \cite{DKSSVW18}, that Wan's conjecture holds if $A$ is a direct sum of Fermats and loops (no chains). On the other hand, if $A$ is not symmetric , there is no reason to expect ${\rm ST}_p(A,G)$ and ${\rm ST}_p(A^T,G^T)$ to be congruent modulo $p$. Indeed, this is false for $W_A(x)=x_1^2x_2+x_2^3+x_3^3$ and $p=19$: using \eqref{eq:3.22}, ${\rm ST}_{19}(A,\langle J\rangle)\equiv 9 \mod 19$ and ${\rm ST}_{19}(A^T,\langle J\rangle)\equiv 8\mod 19$. In other words, Corollary~\ref{cor:3.7} implies that the congruence mirror conjecture does not apply to the pair of Berglund-H\"ubsch mirror elliptic curves $Z_{A,\langle J\rangle}=X_A$ and $Z_{A^T,\langle J\rangle }=X_{A^T}$.

In conclusion, the congruence mirror symmetry conjecture fails if (as one would expect on physical grounds) non-diagonal Berglund-H\"ubsch mirror pairs are viewed as strong mirror pairs. It is therefore unsurprising that Conjecture \ref{con:4.1} is incompatible with the congruence mirror conjecture in those cases.
\end{remark}

\begin{remark}
Conjecture \ref{con:4.1} can be thought of as an arithmetic analogue of the Crepant Resolution Conjecture. In light of \cite{Yasuda04}, one may wonder whether a more general statement for Gorenstein orbifolds might hold. Since at present the scope of our methods is limited to the Calabi-Yau setting of Borisov's construction, we leave this line of inquiry for future work. 
\end{remark}

 \begin{theorem}\label{thm:4.2}
Let $A$ be a diagonal $n\times n$ Berglund-H\"ubsch matrix over $\mathbb F_p$ such that $JA^{-T}J^T=1$ and $n\ge 3$, and let $G=\langle J \rangle$. Assume that for all $i\le i\le n$ the weights $w_i=m(JA^{-T})_i$, where $m$ is the smallest integer such that each $w_i$ is an integer, are pairwise coprime. Then Conjecture \ref{con:4.1} is true. 
 \end{theorem}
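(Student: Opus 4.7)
The plan is to exploit the pairwise coprime hypothesis to reduce the theorem to a direct application of the Gross-Koblitz formula, bypassing any explicit resolution of singularities.

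First, I would check that in this setting $Z_{A,G}$ may be taken equal to $X_A$ itself. The singular locus of the weighted projective space $\mathbb{P}(w_1,\ldots,w_n)$ is concentrated on points whose stabilizer under the weighted $\mathbb{G}_m$-action is nontrivial; pairwise coprimality of the $w_i$ confines this locus to the isolated coordinate vertices $e_i$ with $w_i>1$. Direct substitution shows that $W_A(e_i)=1\neq 0$ for diagonal $W_A$, so $X_A$ avoids these vertices, and the Berglund-H\"ubsch quasi-smoothness then forces $X_A$ to be smooth. Consequently $\#Z_{A,G}(\mathbb{F}_p)=\#X_A(\mathbb{F}_p)$ and no resolution combinatorics enter the picture.

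Second, I would express $\#X_A(\mathbb{F}_p)$ by the classical character-sum method. Expanding the indicator of $W_A=0$ via multiplicative characters of $\mathbb{F}_p^\times$ yields $\#X_A(\mathbb{F}_p)$ as an explicit sum of Jacobi sums $J(\chi_1,\ldots,\chi_n)$, indexed by tuples of characters $\chi_i$ satisfying $\chi_i^{a_i}=1$ together with an overall product constraint coming from the projective quotient. Since $\det(A)$ divides $p-1$, these characters factor through the Teichm\"uller character, and the constrained tuples $(\chi_1,\ldots,\chi_n)$ are in natural bijection with the classes $\gamma\in G^T$ contributing to the supertrace sum of Theorem \ref{thm:3.6}. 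The Gross-Koblitz formula then writes each Gauss sum of order $d\mid p-1$ as $-\pi^{(p-1)a/d}\Gamma_p(a/d)$; assembling Gauss sums into Jacobi sums and using $\pi^{p-1}=-p$ translates each summand in the Weil-type formula into an expression of the form $\pm p^{\mathrm{age}^\vee(\gamma)-1}\Gamma_p(\gamma A^{-1})$, matching the shape of the summands in \eqref{eq:3.18}.

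Third, I would match the two expansions term-by-term. On the cohomological side, the pairs $(\gamma,\lambda)\in G^T\times G$ with $\delta(\gamma,\lambda)=1$ are enumerable via the standard monomial basis of the Milnor ring of $W_A$ (Remark \ref{rem:2.12}): for $G=\langle J\rangle$ the twisted sectors indexed by $\lambda=kJ$ decompose into broad and narrow types according to which components of $\lambda A^{-T}$ are integral, and by Remark \ref{rem:2.12} applied to the truncated matrix $A^\lambda$ the corresponding contributions factor through diagonal Milnor rings of smaller rank. Each such contribution is to be identified with a block of the Jacobi-sum expansion indexed by a corresponding subset of nontrivial characters. The signs $(-1)^{\dim(\lambda)+\mathrm{age}^\vee(\gamma)}$ and powers $p^{\mathrm{age}(\lambda)+\mathrm{age}^\vee(\gamma)-1}$ align, via the identities tying $\mathrm{age}(\lambda)$ to the multinomial structure of the weighted projectivization, with the corresponding factors in the Weil expansion.

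The main obstacle will be the bookkeeping of signs and powers of $p$ when passing between the character-theoretic parametrization of Jacobi sums and the cohomological parametrization by $(\gamma,\lambda)\in G^T\times G$, particularly in the broad twisted sectors where several components of $\lambda A^{-T}$ are noninteger and contribute to $\mathrm{age}(\lambda)$ in a way that must match the projectivization correction terms. The pairwise coprime hypothesis is indispensable here: it both makes $X_A$ smooth (eliminating any toric contributions from exceptional divisors) and rigidifies the bijection between character tuples and cohomology classes by ensuring that the weighted projective constraint $\prod_i\chi_i^{w_i}=1$ has a unique solution type for each $\gamma$.
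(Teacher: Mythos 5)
Your overall architecture coincides with the paper's: use pairwise coprimality to see that $X_A$ is already smooth (so $Z_{A,G}=Y_{A,G}=X_A$ and no resolution enters), expand $\#X_A(\mathbb F_p)$ in Gauss/Jacobi sums following Weil, convert to $p$-adic gamma values via Gross--Koblitz, and match against \eqref{eq:3.18}. Your first two steps are essentially the paper's (which gets smoothness by citing Goto, and also records that $w_i\le\det(A)\mid p-1$ forces $p\nmid w_i$, a point your argument should include so that the weighted projective space behaves as expected).

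The gap is in your third step, where the actual content of the pairwise-coprime hypothesis on the cohomological side is replaced by a picture that does not occur. Under this hypothesis there are no contributing ``broad'' twisted sectors and no Jacobi-sum blocks indexed by proper subsets of nontrivial characters: in Weil's formula for a smooth diagonal hypersurface every surviving character tuple has all $\chi_i$ nontrivial, and these match exactly the untwisted-sector classes $(\gamma,0)$ with $\gamma_i\in\{1,\ldots,A_{ii}-1\}$ and ${\rm age}^\vee(\gamma)\in\mathbb Z$. What your proposal omits, and what actually has to be proved, is the disposition of the twisted sectors $\lambda=kJ\neq 0$: (i) if $A_{ii}\mid k$ for some $i$, the sector contributes nothing, because any surviving class would be labeled by a $\gamma$ supported on the fixed variables with ${\rm age}^\vee(\gamma)\notin\mathbb Z$, hence $\gamma\notin G^T$ --- this is precisely where pairwise coprimality is used on the cohomological side; (ii) the remaining sectors $(0,kJ)$ are narrow and one-dimensional, and a count using $\sum_i 1/A_{ii}=1$ (namely $l-\sum_i(l/A_{ii}-1)-1=n-1$, with ages running through $1,\ldots,n-1$) shows they contribute exactly $1+p+\cdots+p^{n-2}$, i.e.\ they match the ambient weighted-projective-space term of Weil's formula rather than any Jacobi sums. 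Without (i) and (ii) the term-by-term identification you describe cannot be carried out; once they are supplied, your argument reduces to the paper's proof.
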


\begin{proof} Since
\[
w_i\le m(JA^{-T})_i\le \det(A)(JA^{-T})_i\le \det(A)\,,
\]
it follows from the Berglund-H\"ubsch condition on $p$ that each $w_i$ is coprime with $p$ and thus we are in the setting of \cite{Goto}. In particular, because all the weights are pairwise coprime, we have that $Y_{A,G}(\mathbb F_p)=Z_{A,G}(\mathbb F_p)$ is a smooth hypersurface in the weighted projective space $\mathbb P^{n-1}(w_1,\ldots,w_n)$. The elements $(\gamma,\lambda)\in G^T\times G$ such that $\delta(\gamma,\lambda)\neq 0$ are of the form: 1) $(0, kJ)$ for $k\in \{1,\ldots,l-1\}$ not divisible by any $A_{ii}$, where $l$ is the least common multiple of the $A_{ii}$, or 2) $(\gamma, 0)$ with $\gamma=(\gamma_1,\ldots,\gamma_n)$ such that each $\gamma_i$ is in $\{1,\ldots,A_{ii}-1\}$ and $|\gamma A^{-1}|$ is an integer. 

The contribution from elements in 1) to \eqref{eq:3.18} is of the form $p^{\text{age}(kJ) - 1}$ for $k$ not divisible by any $A_{ii}$ (since the assumption that the weights $w_i$ are pairwise coprime together with $\gamma A^{-1} \lambda > 0$ implies that $n_\lambda = 0$). Going from $rJ$ to $(r+1)J$ increases the age by one unless $r$ or $r+1$ is divisible by one of the $A_{ii}$, which are excluded from 1), so the total number of elements in 1) is $l - \sum_{i=1}^n (\frac{l}{A_{ii}}-1) - 1 = n - 1$ and the total contribution of these sectors to \eqref{eq:3.18} is $1+p+\ldots+p^{n-2}$. 

On the other hand, each term labeled by $(\gamma, 0)$ is of the form
\begin{equation}
    (-1)^{n+{\rm age}^\vee(\gamma)}p^{{\rm age}^\vee(\gamma)-1}\Gamma_p(\gamma A^{-1})\,.
\end{equation} 
Taking all these contributions into account, \eqref{eq:3.18} specializes to

\begin{equation}\label{eq:4.1}
1+p+\cdots+p^{n-2}+\frac{(-1)^n}{p}\sum_{\gamma} (-p)^{{\rm age}^\vee(\gamma)}\Gamma_p(\gamma A^{-1})\,,
\end{equation}
the sum being extended over all $\gamma$ such that each $\gamma_i\in \{1,\ldots,A_{ii}-1\}$ and ${\rm age}^\vee(\gamma)\in \mathbb Z$. On the other hand, let $\chi \colon \F_p \to \C_p$ be the Teichm\"uller character, and define $\Theta \colon \F_p \to \C_p^\times$ to be the additive Dwork character $x \mapsto \zeta_p^{\chi(x)} = \sum_{s \geq 0} c_s \pi^s \chi^s(x)$, where $\zeta_p$ is a $p$-th root of unity.
For any $t \in \Z$ we can define the Gauss sum
\begin{equation}
G_t = \sum_{\F^\times} \Theta(x) \chi^{t'}(x)\,,
\end{equation}
where $t'$ is chosen to satisfy $t' \equiv t \text{ mod }p - 1$ and $0 < t' \leq p-1$. Adapting the calculations of \cite{Weil49} to the $p$-adic setting by employing the Techmu\"uller and Dwork characters introduced above, we have that the number of points of $Z_{A,G}(\mathbb F_p)$ is equal to
\begin{equation}\label{eq:4.3}
1+p+\cdots+p^{n-2}+\frac{1}{p}\sum_\beta G_{\frac{(p-1)}{A_{11}}\beta_1} \cdots G_{\frac{(p-1)}{A_{nn}}\beta_n}\,,
\end{equation}
where the sum is over all $\beta=(\beta_i,\ldots,\beta_n)$ such that each $\beta_i\in\{1,\ldots,A_{ii}-1\}$ and $\sum_{i=1}^n \frac{\beta_i}{A_{ii}}=0 \mod 1$. Using the special case of the Gross-Koblitz given in \cite{Koblitz84} (taking into account a different sign convention for Gauss sums)
\begin{equation}\label{eq:4.4}
G_{\frac{p-1}{d}\beta} = p \pi^{-\frac{p-1}{d}\beta} \Gamma_p \left( 1- \frac{\beta}{d}\right)\,,
\end{equation}
valid whenever $d$ divides $p-1$,
and identifying indices by setting $\beta_i=A_{ii}-\gamma_i$, we obtain the equality of \eqref{eq:4.1} and \eqref{eq:4.3}.
\end{proof}

\begin{example}
In particular, it follows from Theorem \ref{thm:4.2} that Conjecture \ref{con:4.1} holds for Fermat hypersurfaces i.e.\ the smooth Calabi-Yau hypersurfaces defined by the equation $x_1^n+\cdots+x_n^n=0$ in $\mathbb P^{n-1}(\mathbb F_p)$ for any integer $n\ge 2$. We refer the reader to \cite{GouveaYui95} for an in-depth study of these and related hypersurfaces. 
\end{example}

\begin{theorem}
Conjecture \ref{con:4.1} holds if $n=3$. 
\end{theorem}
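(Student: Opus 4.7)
The plan is to establish $\#Z_{A,G}(\mathbb F_p) = {\rm ST}_p(A,G)$ by case analysis on the Kreuzer normal form of $A$. For $n=3$ with $JA^{-T}J^T=1$, the matrix $A$ decomposes (up to permutation of the variables) as a direct sum of chain and loop blocks summing to size $3$, so there are four types to treat: (i) three Fermat (i.e.\ $1\times 1$) blocks; (ii) a $2\times 2$ chain plus a Fermat; (iii) a $3\times 3$ chain; (iv) a $3\times 3$ loop. Because $Z_{A,G}$ is a curve, quotient singularities are automatically smooth, so the crepant resolution is the normalization of $Y_{A,G}$ and $\#Z_{A,G}(\mathbb F_p)$ takes the Hasse-Weil form $p+1-a_p$, where $a_p$ is the trace of Frobenius on $H^1(Z_{A,G})$.

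For each case I would split the sum \eqref{eq:3.18} into twisted sectors $(0,kJ)$ with $k\in\{1,\ldots,l-1\}$ (where $l$ is the order of $J$ in $G$) and the remaining sectors $(\gamma,\lambda)$ with $\gamma\neq 0$. The twisted sectors should assemble to $p+1$, mirroring the $1+p+\cdots+p^{n-2}$ calculation in the proof of Theorem \ref{thm:4.2}, which collapses to $1+p$ when $n=3$. The remaining sum must then reproduce $-a_p$. The matching invokes the Gross-Koblitz formula \eqref{eq:4.4} to convert each $\Gamma_p(\gamma A^{-1})$ into a normalized Gauss sum, then compares with a Weil-type formula for $\#X_A(\mathbb F_p)$ corrected by the twisted-sector contributions of $[X_A/\langle J\rangle]$ computed via the Lefschetz formula on the inertia stack following \cite{Behrend93, Rose07}. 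Case (i) is an extension of Theorem \ref{thm:4.2} to non-coprime weights, requiring explicit tracking of exceptional divisor contributions; case (ii) follows from the multiplicative behavior of both sides of \eqref{eq:3.18} under direct sums of Berglund-H\"ubsch matrices, reducing it to the pure $2\times 2$ chain case and a Fermat Gauss-sum factor.

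The main obstacle is case (iv), and to a lesser extent case (iii): the entries $(\gamma A^{-1})_i$ are nontrivial rational combinations of the components of $\gamma$, so the Gauss sums produced by Gross-Koblitz are attached to products of multiplicative characters that do not directly assemble into a diagonal Jacobi sum. I would address this either by (a) exploiting the classical birational equivalence of a Berglund-H\"ubsch loop or chain elliptic curve with a Fermat-type elliptic curve via a character twist, reducing the point-count to the diagonal setting after adjusting Gauss-sum factors, or by (b) adapting Weil's evaluation of Jacobi sums directly to the loop (resp.\ chain) equation and matching term-by-term with the right-hand side of \eqref{eq:3.18} after Gross-Koblitz. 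Route (a) is conceptually cleaner but requires justifying that the birational map preserves the point count modulo corrections absorbed by the orbifold structure; route (b) is more computational but self-contained, and is the route I expect to actually carry out, since the loop case does not admit a single change of variables reducing it to diagonal form. As a sanity check throughout, Corollary \ref{cor:3.7} guarantees the identity $\#Z_{A,G}(\mathbb F_p) \equiv {\rm ST}_p(A,G)\pmod p$, so in each case only the lift from a mod-$p$ congruence to an exact equality is at stake, and this lift is governed precisely by the Gross-Koblitz evaluation of the $\Gamma_p$-factors.
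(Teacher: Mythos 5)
Your overall plan (finite case analysis via the Kreuzer classification, twisted sectors giving $1+p$, untwisted sectors giving $-a_p$) is sound in outline, and your ``route (a)'' is in fact essentially what the paper does; but the route you commit to has concrete unresolved gaps. First, your enumeration of cases is incomplete: for $n=3$ the Kreuzer normal forms include a fifth type, a $2\times 2$ \emph{loop} plus a Fermat block (e.g.\ $x_1^2x_2+x_1x_2^2+x_3^3$ and $x_1^3x_2+x_1x_2^3+x_3^2$), which appears in the paper's list of the $13$ admissible matrices and is covered by neither your case (ii) (chain plus Fermat) nor your case (iv) ($3\times3$ loop). Second, your proposed reduction of case (ii) via ``multiplicative behavior of both sides of \eqref{eq:3.18} under direct sums'' is not justified: the projective point count of $W_{A_1}(x)+W_{A_2}(y)=0$ does not factor as a product, and on the supertrace side $G=\langle J\rangle$ is not a product of subgroups of $G_{A_1}\times G_{A_2}$, so neither side is literally multiplicative. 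Third, route (b) --- adapting Weil's Jacobi-sum evaluation directly to chain and loop equations and matching against $\Gamma_p(\gamma A^{-1})$ with denominators $\det(A)$ (e.g.\ ninths for the $3\times3$ loop) --- is precisely the hard computation, and you leave it entirely as a sketch while acknowledging the character sums ``do not directly assemble into a diagonal Jacobi sum.''

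The paper avoids all of this. It lists the $13$ matrices explicitly, groups them into three types by computing ${\rm ST}_p(A,\langle J\rangle)$ via Theorem \ref{thm:3.6} and observing that the supertrace depends only on the type, notes that each type contains a diagonal member to which Theorem \ref{thm:4.2} applies, and then shows that any two curves of the same type are birational (e.g.\ $x_2^3x_3+x_3^2x_1+x_1^2$ and $x_1^2x_2+x_2^2x_3+x_3^2x_1$ agree on the patch $x_2\neq 0$), hence isomorphic as smooth projective curves, hence have equal point counts. Your hesitation about route (a) --- that the birational map ``preserves the point count modulo corrections absorbed by the orbifold structure'' --- is unfounded here: for $n=3$ one has $Z_{A,\langle J\rangle}=X_A$ with no orbifold correction, and birational smooth projective curves over $\mathbb F_p$ are isomorphic. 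The one ingredient your sketch of route (a) omits, and which the paper supplies, is the verification that ${\rm ST}_p(A,\langle J\rangle)$ itself is constant within each isomorphism class, so that transporting the equality from the diagonal representative is legitimate on both sides. If you pursue this problem, I would recommend abandoning route (b), completing the case list, and adding that supertrace-invariance check.
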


\begin{proof}
It is easy to see that $JA^{-T}J^T=1$ implies $A_{ii}\le 6$ for each $i\in \{1,2,3\}$. A direct search of these finitely many cases reveals, up to relabeling, the following 13 matrices which we divide into three types:
\begin{align*}
{\rm type\, I:} &
\left(\begin{smallmatrix} 3 & 0 & 0\\ 0 & 3 & 0\\ 0& 0 & 3\end{smallmatrix}\right)\,, \left(\begin{smallmatrix} 3 & 1 & 0\\ 0 & 2 & 0\\ 0& 0 & 3\end{smallmatrix}\right)\,,\left(\begin{smallmatrix} 2 & 1 & 0\\ 1 & 2 & 0\\ 0& 0 & 3\end{smallmatrix}\right)\,,\left(\begin{smallmatrix} 3 & 1 & 0\\ 0 & 2 & 1\\ 0& 0 & 2\end{smallmatrix}\right)\, , \left(\begin{smallmatrix} 2 & 1 & 0\\ 0 & 2 & 1\\ 1& 0 & 2\end{smallmatrix}\right)\,;\\
{\rm type\, II:} & \left(\begin{smallmatrix} 2 & 0 & 0\\ 0 & 4 & 0\\ 0& 0 & 4\end{smallmatrix}\right)\,, \left(\begin{smallmatrix} 2 & 1 & 0\\ 0 & 2 & 0\\ 0& 0 & 4\end{smallmatrix}\right)\,, \left(\begin{smallmatrix} 4 & 1 & 0\\ 0 & 3 & 0\\ 0& 0 & 2\end{smallmatrix}\right)\,, \left(\begin{smallmatrix} 3 & 1 & 0\\ 1 & 3 & 0\\ 0& 0 & 2\end{smallmatrix}\right)\,, \left(\begin{smallmatrix} 2 & 1 & 0\\ 1 & 2 & 0\\ 0& 0 & 3\end{smallmatrix}\right)\,;\\
{\rm type\, III:} & \left(\begin{smallmatrix} 2 & 0 & 0\\ 0 & 3 & 0\\ 0& 0 & 6\end{smallmatrix}\right)\,, \left(\begin{smallmatrix} 2 & 1 & 0\\ 0 & 3 & 0\\ 0& 0 & 3\end{smallmatrix}\right)\,, \left(\begin{smallmatrix} 3 & 1 & 0\\ 0 & 4 & 0\\ 0& 0 & 2\end{smallmatrix}\right)\,.
\end{align*}
A straightforward application of Theorem \ref{thm:3.6} shows that if $A$ is any matrix of type I, then
\begin{equation}
{\rm ST}_p(A,\langle J\rangle )=1 + p + \left(\Gamma_p\left( \frac{1}{3}\right) \right)^3 - p \left(\Gamma_p\left( \frac{2}{3}\right) \right)^3\,.
\end{equation}
Similarly, if $A$ is any matrix of type II, then 
\begin{equation}
{\rm ST}_p(A,\langle J\rangle )=1 + p + \Gamma_p\left(\frac{1}{2}\right)\left(\Gamma_p\left( \frac{1}{4}\right) \right)^2 - p\Gamma_p\left(\frac{1}{2}\right)\left(\Gamma_p\left( \frac{3}{4}\right) \right)^2\,,
\end{equation}
while 
\begin{equation}
{\rm ST}_p(A,\langle J\rangle )=1 + p + \Gamma_p\left(\frac{1}{2}\right)\Gamma_p\left( \frac{1}{3}\right)\Gamma_p\left(\frac{1}{6}\right) - p\Gamma_p\left(\frac{1}{2}\right)\Gamma_p\left( \frac{2}{3}\right)\Gamma_p\left(\frac{5}{6}\right)
\end{equation}
whenever $A$ is a matrix of type III. Since Theorem \ref{thm:4.2} applies to the diagonal matrices in each type, it remains to show that if $A$ and $A'$ are of the same type, then the elliptic curves $X_A(\mathbb F_p)=Z_{A,\langle J\rangle}(\mathbb F_p)$ and $X_{A}'(\mathbb F_p)=Z_{A',\langle J\rangle}(\mathbb F_p)$ have the same number of points. To see this, consider for instance the last two listed type I elliptic curves with equations $x_2^3x_3+x_3^2x_1+x_1^2 = 0$ and $x_1^2x_2+x_2^2x_3+x_3^2x_1=0$. Since they coincide when restricted to the affine patch $x_2\neq 0$, they are birational, and thus isomorphic. The remaining cases are completely analogous and left to the reader.
\end{proof}

\begin{table}
\caption{Singular weighted diagonal K3 hypersurfaces. Multiplicity refers to the number of singular points of that type on $X_{A}$ . The last column lists the number of $-2$ curves appearing in the minimal resolution of the corresponding singularity\label{table:1}}.

\label{table:weighted_diagonal_k3s}
 \small
 \centering
\begin{tabular}{cccc}

\toprule
$W_A(x)$ & Singularities & Multiplicity & Contribution to $\nu$ \\
\midrule
\addlinespace[0.7em]
 $x_1^2+x_2^3+x_3^7+x_4^{42}$& $A_{7,6}$ & 1 & 6  \\
 &   $A_{3,2}$ & 1 & 2  \\
 &   $A_{2,1}$ & 1 & 1  \\
\midrule
\addlinespace[0.7em]
 
 $x_1^2+x_2^3+x_3^{10}+x_4^{15}$ & $A_{5,4}$ & 1 & 4  \\
 &   $A_{3,2}$ & 2 &  2  \\
 &   $A_{2,1}$ & 3 & 1 \\
 \midrule
\addlinespace[0.7em]
 $x_1^2+x_2^3+x_3^8+x_4^{24} $& $A_{4,3}$ & 1 & 3  \\
 &   $A_{3,2}$ & 2 & 2  \\
 \midrule
\addlinespace[0.7em]
$x_1^2+x_2^4+x_3^5+x_4^{20}$ & $A_{5,4}$ & 2 & 4  \\
 &   $A_{2,1}$ & 1 & 1 \\
 \midrule
\addlinespace[0.7em]
$x_1^2+x_2^3+x_4^{18}+x_5^{18}$ & $A_{3,2}$ & 1 & 2  \\
 &   $A_{2,1}$ & 3 & 1  \\
 \midrule
\addlinespace[0.7em]
$x_1^2+x_2^3+x_3^{12}+x_4^{12}$ & $A_{2,1}$ & 1 & 1  \\
\midrule
\addlinespace[0.7em]
$ x_1^2+x_2^4+x_3^6+x_4^{12}$& $A_{3,2}$ & 2 & 2 \\
 &   $A_{2,1}$ & 2 & 1  \\
 \midrule
\addlinespace[0.7em]
 $x_1^3+x_2^3+x_3^4+x_4^{12} $& $A_{4,3}$ & 3 & 3  \\
 \midrule
\addlinespace[0.7em]
$x_1^3+x_2^4+x_3^4+x_4^{6}$ & $A_{3,2}$ & 4 & 2  \\
 &   $A_{2,1}$ & 3 & 1  \\
 \midrule
\addlinespace[0.7em]
$x_1^2+x_2^5+x_3^5+x_4^{10}$ & $A_{2,1}$ & 5 & 1   \\
\midrule
\addlinespace[0.7em]
 $ x_1^2+x_2^4+x_3^8+x_4^8$ & $A_{2,1}$ & 2 & 1 \\
 \midrule
\addlinespace[0.7em]
 $x_1^3+x_2^3+x_3^6+x_4^6$& $A_{2,1}$ & 3 & 1  \\
\bottomrule
\end{tabular}
\end{table}

\begin{theorem}\label{thm:4.6}
Conjecture \ref{con:4.1} holds for all weighted diagonal K3 surfaces.
\end{theorem}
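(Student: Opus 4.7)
The plan is to combine Theorem \ref{thm:4.2} with a case-by-case check on the finitely many diagonal K3 weight systems it does not cover. The Calabi-Yau condition $\sum_{i=1}^4 a_i^{-1} = 1$ admits exactly 95 solutions in positive integers with $a_i \ge 2$; those with pairwise coprime weights define smooth hypersurfaces and are already settled by Theorem \ref{thm:4.2}. The remaining twelve systems are precisely the entries of Table \ref{table:1}: each yields an $X_A$ with a small collection of isolated ADE singularities whose crepant resolution $Z_{A, \langle J\rangle}$ is a smooth K3 surface obtained by replacing each $A_{k, k-1}$ singular point with a chain of $k-1$ exceptional $(-2)$-curves.

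For each of the twelve singular cases, the plan is to decompose both sides of the conjectured equality in parallel and match them term by term. On the geometric side, I would follow Goto \cite{Goto} to write
\[
\#Z_{A, \langle J\rangle}(\mathbb{F}_p)=\#X_A(\mathbb{F}_p)+\sum_s \nu_s\, p\,,
\]
with $s$ ranging over the singular points of $X_A$ and $\nu_s$ read off the last column of Table \ref{table:1}, and then evaluate $\#X_A(\mathbb{F}_p)$ via Weil's Gauss-sum formula as in the proof of Theorem \ref{thm:4.2}. On the cohomological side, Theorem \ref{thm:3.6} decomposes ${\rm ST}_p(A, \langle J\rangle)$ into contributions indexed by $(\gamma, \lambda) \in G^T \times G$, which I would sort into three groups: (i) pairs with $\lambda = kJ$ contributing the coarse $1+p+p^2$ from narrow sectors; (ii) pairs with $\lambda = 0$ and $\gamma \ne 0$ whose $\Gamma_p$-values convert, via the Gross-Koblitz identity \eqref{eq:4.4}, to exactly the Gauss-sum expression for $\#X_A(\mathbb{F}_p)$; and (iii) pairs with $\dim(\lambda) = 2$ and ${\rm age}(\lambda) = 1$, which, via Remark \ref{rem:2.12} applied to the sub-matrix $A^\lambda$, correspond to sectors supported on coordinate loci on which $X_A$ acquires singularities.

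The heart of the argument, and the main obstacle, is to match the type (iii) contributions to the $\nu_s \cdot p$ corrections. The plan is to show, entry by entry in Table \ref{table:1}, that the twisted sectors of type (iii) are in bijection with the $(-2)$-curves of the minimal resolution and that each contributes exactly $p$ to ${\rm ST}_p(A, \langle J\rangle)$. Concretely, for a singular point of type $A_{k, k-1}$ on a coordinate axis of $X_A$, the relevant sectors are labeled by those $\lambda$ fixing that axis with ${\rm age}(\lambda) = 1$; Remark \ref{rem:2.20} (via the McKay/CY-LG identification) guarantees there are exactly $k-1$ such sectors, and after applying the reflection formula \eqref{eq:3.22bis} the associated $\Gamma_p$-products collapse to a sign which combines with $(-1)^{\dim(\lambda)}$ from \eqref{eq:3.18} to yield the desired $+p$ per sector. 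Since the twelve cases involve only a modest total number of twisted sectors, the verification is elementary to carry out once the framework above is in place, and completes the proof by direct comparison with Goto's point counts.
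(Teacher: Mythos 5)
Your overall architecture coincides with the paper's: dispose of the two smooth cases via Theorem~\ref{thm:4.2}, then for the twelve singular entries of Table~\ref{table:1} compare $\#Z_{A,\langle J\rangle}(\mathbb F_p)=\#X_A(\mathbb F_p)+\nu p$ (Goto, plus Weil and Gross--Koblitz for $\#X_A$) with the sector decomposition of ${\rm ST}_p(A,\langle J\rangle)$ from Theorem~\ref{thm:3.6}, matching the $\lambda=0$ sectors to the Gauss-sum part. But your central accounting step is wrong. You claim that the narrow sectors contribute only $1+p+p^2$ and that the broad sectors with $\dim(\lambda)=2$ are in bijection with the exceptional $(-2)$-curves. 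Both claims fail: for $W_A=x_1^2+x_2^3+x_3^{10}+x_4^{15}$ (Table~\ref{table:2}) there are eight narrow sectors of age $1$, each contributing $p$, and only four broad sectors, while $\nu=11$. The ``entry by entry'' bijection you propose to verify would therefore fail at the first entry. What is true, and what the paper uses, is the aggregate statement: by Remark~\ref{rem:2.20} every $\lambda\neq 0$ sector other than $y_1y_2y_3y_4$ and $y_1^{A_{11}-1}\cdots y_4^{A_{44}-1}$ lands in $H^{1,1}(Z_{A,G}(\mathbb C))$, and comparing with $h^{1,1}=20$ and the $\lambda=0$ contribution shows there are exactly $\nu+1$ such sectors in total, narrow and broad together ($\nu$ exceptional classes plus the hyperplane class); one then checks that each contributes exactly $p$.

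Two further gaps. First, the sign of a broad-sector contribution is not automatic: such a sector contributes $(-1)^{\dim(\lambda)+{\rm age}^\vee(\gamma)}\,p\,\Gamma_p(\gamma_i/A_{ii})\Gamma_p(\gamma_j/A_{jj})$ with $\gamma_i/A_{ii}+\gamma_j/A_{jj}=1$, and the reflection formula~\eqref{eq:3.22bis} turns the gamma product into $(-1)^{p-(p-1)\gamma_i/A_{ii}}$, whose value depends on the parity of $(p-1)/A_{ii}$. The needed input is that every $W_A$ in Table~\ref{table:1} has at least two even exponents, so that $(p-1)/A_{ii}=\bigl(\prod_{j\neq i}A_{jj}\bigr)\cdot\frac{p-1}{\det(A)}$ is even and the total sign comes out to $+p$; you assert the conclusion without this argument (and note that $(-1)^{\dim(\lambda)}=+1$ here, so the compensating sign comes from $(-1)^{{\rm age}^\vee(\gamma)}$, not from $\dim(\lambda)$). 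Second, Table~\ref{table:1} describes the singular locus over $\mathbb C$; to invoke Goto's count $\#Z=\#X+\nu p$ over $\mathbb F_p$ you must check that all singular points, and the exceptional curves over them, are already defined over $\mathbb F_p$. The paper handles this by observing that $4\mid\det(A)\mid p-1$, so $\mathbb F_p$ contains the roots of unity needed to split the relevant points. Your proposal omits this entirely.
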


\begin{proof}
Weighted diagonal K3 surfaces over finite fields are classified in \cite{Goto}. Two of them, the Fermat quartic in $\mathbb P^3$ and the hypersurface $x_1^2+x_2^6+x_3^6+x_4^6=0$ in $\mathbb P(3,1,1,1)$ are smooth and Theorem \ref{thm:4.2} applies to them. The remaining $12$ cases with corresponding lists of singularities and the number of $-2$ curves added in the corresponding minimal resolution are listed in Table \ref{table:1}. In general, as observed in \cite{Goto}, some of the singularities in Table~\ref{table:1} are only defined over suitable field extensions. However, because in our case the coefficients in front of the monomials of $W_A(x)$ are always equal to $1$, these extensions are generated by elements $\gamma$ such that $1+\gamma^m=0$ with $m\in \{1,2\}$ depending on the singularity. On the other hand, inspection of Table \ref{table:1} shows that $\det(A)$, and thus $p-1$, is always divisible by $4$. Hence our assumptions guarantee that $\mathbb F_p$ has primitive fourth roots of unity and thus $\mathbb F_p(\gamma)=\mathbb F_p$. Therefore, all the singularities in Table \ref{table:1} actually occur, just as they do over $\mathbb C$, when $A$ satisfies the assumptions of Conjecture \ref{con:4.1}. Combining the calculations of \cite{Weil49} with \eqref{eq:4.4} as in the proof of Theorem \ref{thm:4.2}, we obtain
\begin{equation}\label{eq:4.8}
\# X_{A}(\mathbb F_p)=1+p+p^2 + \sum_{\gamma} (-p)^{{\rm age}^\vee(\gamma)-1}\Gamma_p(\gamma A^{-1})\,,
\end{equation}
the sum being extended over all $\gamma$ such that each $\gamma_i\in \{1,\ldots,A_{ii}-1\}$ and ${\rm age}^\vee(\gamma)\in \mathbb Z$. We conclude that
\begin{equation}\label{eq:4.9}
\#Z_{A,\langle J\rangle}(\mathbb F_p)=1+(\nu +1)p +p^2+\sum_{\gamma}  (-p)^{{\rm age}^\vee(\gamma)-1}\Gamma_p(\gamma A^{-1})\,,
\end{equation}
 where $\nu$ is the total number of $-2$ curves appearing in $Z_{A,\langle J\rangle}$ viewed as the minimal resolution of $X_{A}$. Since the sum over $\gamma$ in \eqref{eq:4.9} is equal to the contribution from terms labeled by $\lambda=0$ terms to \eqref{eq:3.18}, it remains to check that the contribution from the $\lambda\neq 0$ terms in \eqref{eq:3.18} is exactly $1+(\nu+1)p+p^2$. Since the sum over $\gamma$ in \eqref{eq:4.9} is equal to the contribution from terms labeled by $\lambda=0$ terms to \eqref{eq:3.18}, it suffices to check that the contribution from the $\lambda\neq 0$ terms in \eqref{eq:3.18} is exactly $1+(\nu+1)p+p^2$. It is easy to see that $y_1y_2y_3y_4$ contributes $1$ while $y_1^{A_{11}-1}y_2^{A_{22}-1}y_3^{A_{33}-1}y_4^{A_{44}-1}$ contributes $p^2$. By Remark \ref{rem:2.20}, we know that the remaining $\lambda\neq 0$ terms all correspond to cohomology classes in $H^{1,1}(Z_{A,G}(\mathbb C))$. In particular, there must be precisely $\nu$ of them and they must be of one of two kinds: monomials of the form $y^\lambda$ with ${\rm age}(\lambda)=1$ and monomials of the form $x_i^{\gamma_i}x_j^{\gamma_j}y_r^{\lambda_r}y_s^{\lambda_s}\theta_i\theta_j$ with ${\rm age}(\lambda)=1={\rm age}^\vee(\gamma)$. Each of these $y^\lambda$ monomials contributes $p$ to \eqref{eq:3.18}. The constraint ${\rm age}^\vee(\gamma)=1$ forces each monomial $x_i^{\gamma_i}x_j^{\gamma_j}y_r^{\lambda_r}y_s^{\lambda_s}\theta_i\theta_j$ to be such that $\frac{\gamma_i}{A_{ii}}+\frac{\gamma_j}{A_{jj}}=1$, making the corresponding contribution to \eqref{eq:3.18} equal to
 \begin{equation}\label{eq:4.10}
-p\Gamma_p\left(\frac{\gamma_i}{A_{ii}}\right)\Gamma_p\left(\frac{\gamma_j}{A_{jj}}\right)=-p(-1)^{p-(p-1)\frac{\gamma_i}{A_{ii}}}\,,
 \end{equation}
where the reflection formula \eqref{eq:3.22bis} was used to derive \eqref{eq:4.10}. Inspection of Table \ref{table:1} shows that each $W_A(x)$ has at least two terms with even exponents. Taking again into account that $\det(A)$ divides $p-1$, we conclude that $\frac{p-1}{A_{ii}}$ is necessarily even and thus $p-(p-1)\frac{\gamma_i}{A_{ii}}$ is necessarily odd. Hence, monomials of the form $x_i^{\gamma_i}x_j^{\gamma_j}y_r^{\lambda_r}y_s^{\lambda_s}\theta_i\theta_j$ contribute by $p$ to \eqref{eq:3.18}, concluding the proof.

\end{proof}

\begin{remark}
The results of \cite{Goto} require the assumption that $p$ is coprime to each of the weights of the ambient weighted projective space. In our case, this condition is automatically satisfied by our assumption that $\det(A)$ divides $p-1$. Indeed, the largest prime that appears as a factor of one of the weights in the list of weighted diagonal hypersurfaces in \cite{Goto} is $7$, while the smallest value of $\det(A)$ in the same list is $256$. 
\end{remark}

\begin{example}
We illustrate the proof of Theorem \ref{thm:4.6} by listing all monomials and their contributions to \eqref{eq:3.18} in Table \ref{table:2}. In this case, since $X_{A}$ has three $A_{2,1}$ singularities, two $A_{3,2}$ singularities, and a single $A_{5,4}$ singularity, then $\nu=3+4+4=11$. Correspondingly, there are $(\nu+1)=12$ entries (listed from the third to the fourteenth) in Table \ref{table:2}, each contributing $p$ to \eqref{eq:3.18}.

\begin{table}
\caption{Contributions of each basis element of $H(\mathcal B_A(\gamma,\lambda),d_A+d^\vee_{A^T})$ to \eqref{eq:3.18} when $W_A(x)=x_1^2 + x_2^3 + x_3^{10} + x_4^{15} = 0$. The first column lists the corresponding Hodge numbers of $Z_{A,G}(\mathbb C)$. \label{table:2} 
}
\setlength{\tabcolsep}{0.7em} 
\centering
\small
\begin{tabular}{lll}

\toprule
Hodge Number & Monomial & Contribution to \eqref{eq:3.18}  \\
\midrule
$\left(0, 0\right)$ & $y_{1}^{1} y_{2}^{1} y_{3}^{1} y_{4}^{1}$ & $1$  \\
\addlinespace[0.5em]

$\left(2, 0\right)$ & $x_{1}^{1} x_{2}^{1} x_{3}^{1} x_{4}^{1}  \theta_{1} \theta_{2} \theta_{3} \theta_{4}$ & $- \, \Gamma_p \left( \frac{1}{2} \right) \Gamma_p \left( \frac{1}{3} \right) \Gamma_p \left( \frac{1}{10} \right) \Gamma_p \left( \frac{1}{15} \right)$  \\
\addlinespace[0.5em]

$\left(1, 1\right)$ & $y_{1}^{1} y_{2}^{1} y_{3}^{3} y_{4}^{13}$ & $p$  \\
[0.5em]
 & $y_{1}^{1} y_{2}^{1} y_{3}^{5} y_{4}^{10}$ & $p$  \\
 [0.5em]
 & $y_{1}^{1} y_{2}^{1} y_{3}^{7} y_{4}^{7}$ & $p$  \\
 [0.5em]
 & $y_{1}^{1} y_{2}^{1} y_{3}^{9} y_{4}^{4}$ & $p$  \\
 [0.5em]
 & $y_{1}^{1} y_{2}^{2} y_{3}^{1} y_{4}^{11}$ & $p$  \\
 [0.5em]
 & $y_{1}^{1} y_{2}^{2} y_{3}^{3} y_{4}^{8}$ & $p$   \\
 [0.5em]
 & $y_{1}^{1} y_{2}^{2} y_{3}^{5} y_{4}^{5}$ & $p$   \\[0.5em]
 & $y_{1}^{1} y_{2}^{2} y_{3}^{7} y_{4}^{2}$ & $p$   \\
 [0.5em]
 & $x_{2}^{1} x_{4}^{10} y_{1}^{1} y_{3}^{5} \theta_{2} \theta_{4}$ & $- p \, \Gamma_p \left( \frac{1}{3} \right) \Gamma_p \left( \frac{2}{3} \right)$   \\
 [0.5em]
 & $x_{2}^{2} x_{4}^{5} y_{1}^{1} y_{3}^{5} \theta_{2} \theta_{4}$ & $- p \, \Gamma_p \left( \frac{2}{3} \right) \Gamma_p \left( \frac{1}{3} \right)$  \\
 [0.5em]
 & $x_{1}^{1} x_{3}^{5} y_{2}^{1} y_{4}^{10} \theta_{1} \theta_{3}$ & $- p \, \Gamma_p \left( \frac{1}{2} \right) \Gamma_p \left( \frac{1}{2} \right)$   \\
 [0.5em]
 & $x_{1}^{1} x_{3}^{5} y_{2}^{2} y_{4}^{5} \theta_{1} \theta_{3}$ & $- p \, \Gamma_p \left( \frac{1}{2} \right) \Gamma_p \left( \frac{1}{2} \right)$ \\
 [0.5em]
 & $x_{1}^{1} x_{2}^{1} x_{3}^{3} x_{4}^{13}  \theta_{1} \theta_{2} \theta_{3} \theta_{4}$ & $p \, \Gamma_p \left( \frac{1}{2} \right) \Gamma_p \left( \frac{1}{3} \right) \Gamma_p \left( \frac{3}{10} \right) \Gamma_p \left( \frac{13}{15} \right)$ \\
 [0.5em]
 & $x_{1}^{1} x_{2}^{1} x_{3}^{5} x_{4}^{10}  \theta_{1} \theta_{2} \theta_{3} \theta_{4}$ & $p \, \Gamma_p \left( \frac{1}{2} \right) \Gamma_p \left( \frac{1}{3} \right) \Gamma_p \left( \frac{1}{2} \right) \Gamma_p \left( \frac{2}{3} \right)$ \\
 [0.5em]
 & $x_{1}^{1} x_{2}^{1} x_{3}^{7} x_{4}^{7}  \theta_{1} \theta_{2} \theta_{3} \theta_{4}$ & $p \, \Gamma_p \left( \frac{1}{2} \right) \Gamma_p \left( \frac{1}{3} \right) \Gamma_p \left( \frac{7}{10} \right) \Gamma_p \left( \frac{7}{15} \right)$ \\
 [0.5em]
 & $x_{1}^{1} x_{2}^{1} x_{3}^{9} x_{4}^{4}  \theta_{1} \theta_{2} \theta_{3} \theta_{4}$ & $p \, \Gamma_p \left( \frac{1}{2} \right) \Gamma_p \left( \frac{1}{3} \right) \Gamma_p \left( \frac{9}{10} \right) \Gamma_p \left( \frac{4}{15} \right)$  \\
 [0.5em]
 & $x_{1}^{1} x_{2}^{2} x_{3}^{1} x_{4}^{11}  \theta_{1} \theta_{2} \theta_{3} \theta_{4}$ & $p \, \Gamma_p \left( \frac{1}{2} \right) \Gamma_p \left( \frac{2}{3} \right) \Gamma_p \left( \frac{1}{10} \right) \Gamma_p \left( \frac{11}{15} \right)$ \\
 [0.5em]
 & $x_{1}^{1} x_{2}^{2} x_{3}^{3} x_{4}^{8}  \theta_{1} \theta_{2} \theta_{3} \theta_{4}$ & $p \, \Gamma_p \left( \frac{1}{2} \right) \Gamma_p \left( \frac{2}{3} \right) \Gamma_p \left( \frac{3}{10} \right) \Gamma_p \left( \frac{8}{15} \right)$ \\
 [0.5em]
 & $x_{1}^{1} x_{2}^{2} x_{3}^{5} x_{4}^{5}  \theta_{1} \theta_{2} \theta_{3} \theta_{4}$ & $p \, \Gamma_p \left( \frac{1}{2} \right) \Gamma_p \left( \frac{2}{3} \right) \Gamma_p \left( \frac{1}{2} \right) \Gamma_p \left( \frac{1}{3} \right)$ \\
 [0.5em]
 & $x_{1}^{1} x_{2}^{2} x_{3}^{7} x_{4}^{2}  \theta_{1} \theta_{2} \theta_{3} \theta_{4}$ & $p \, \Gamma_p \left( \frac{1}{2} \right) \Gamma_p \left( \frac{2}{3} \right) \Gamma_p \left( \frac{7}{10} \right) \Gamma_p \left( \frac{2}{15} \right)$ \\
 [0.5em]
$\left(0, 2\right)$ & $x_{1}^{1} x_{2}^{2} x_{3}^{9} x_{4}^{14}  \theta_{1} \theta_{2} \theta_{3} \theta_{4}$ & $- p^{2} \, \Gamma_p \left( \frac{1}{2} \right) \Gamma_p \left( \frac{2}{3} \right) \Gamma_p \left( \frac{9}{10} \right) \Gamma_p \left( \frac{14}{15} \right)$  \\
[0.5em]
$\left(2, 2\right)$ & $y_{1}^{1} y_{2}^{2} y_{3}^{9} y_{4}^{14}$ & $p^{2}$  \\[0.7em]

\bottomrule
\end{tabular}
\end{table}

\end{example}

\section*{Acknowledgements}
It is a pleasure to acknowledge insightful exchanges with Chuck Doran, Doug Haessig, Tristan H\"ubsch, Tyler Kelly, Bong Lian, Daqing Wan, Ursula Whitcher, and Noriko Yui over the long gestation time of this work. We would like to thank the anonymous referee for valuable feedback. M.A.\ was supported in part by VCU Quest Award ``Quantum Fields and Knots: An Integrative Approach.'' Some of the work was carried out while A.P. visited VCU in connection with the Richmond Geometry Meeting funded by NSF grant DMS-2349810, and while A.P. held a Coleman Postdoctoral Fellowship at Queens University, supported by the Natural Sciences and Engineering Research Council (NSERC) of Canada through the Discovery Grant of Noriko Yui.

%\newpage

\begin{bibdiv}
\begin{biblist}

\bib{AP}{article}{
   author={Aldi, Marco},
   author={Peruni\v{c}i\'{c}, Andrija},
   title={$p$-adic Berglund-H\"{u}bsch duality},
   journal={Adv. Theor. Math. Phys.},
   volume={19},
   date={2015},
   number={5},
   pages={1115--1139}
}

\bib{Behrend93}{article}{
   author={Behrend, Kai A.},
   title={The Lefschetz trace formula for algebraic stacks},
   journal={Invent. Math.},
   volume={112},
   date={1993},
   number={1},
   pages={127--149}
}

 \bib{BH}{article}{
    title={A generalized construction of mirror manifolds},
    author={{Berglund}, Per},
    author={{H{\"u}bsch}, Tristan},
    journal={Nuclear Physics B},
    volume={393},
    number={1},
    pages={377--391},
    year={1993}
  }

  \bib{B1}{article}{
    title={{B}erglund-{H}{\"u}bsch mirror symmetry via vertex algebras},
    author={Borisov, Lev A.},
    journal={Communications in Mathematical Physics},
    volume={320},
    number={1},
    pages={73--99},
    year={2013}
  }

\bib{CdlO}{article}{
  title={The zeta-function of a p-adic manifold, {D}work theory for physicists},
  author={{Candelas}, Philip},
  author={{de la Ossa}, Xenia},
   journal={Communications in Number Theory and Physics},
  year={2007},
  volume={1},
  pages={479--512}
}

\bib{ChiodoRuan11}{article}{
   author={Chiodo, Alessandro},
   author={Ruan, Yongbin},
   title={LG/CY correspondence: the state space isomorphism},
   journal={Adv. Math.},
   volume={227},
   date={2011},
   number={6},
   pages={2157--2188},
}

\bib{Cohen07}{book}{
   author={Cohen, Henri},
   title={Number theory. Vol. II. Analytic and modern tools},
   series={Graduate Texts in Mathematics},
   volume={240},
   publisher={Springer, New York},
   date={2007}
}

\bib{DKSSVW18}{article}{
   author={Doran, Charles F.},
   author={Kelly, Tyler L.},
   author={Salerno, Adriana},
   author={Sperber, Steven},
   author={Voight, John},
   author={Whitcher, Ursula},
   title={Zeta functions of alternate mirror Calabi-Yau families},
   journal={Israel J. Math.},
   volume={228},
   date={2018},
   number={2},
   pages={665--705}
}

  \bib{GP}{article}{
    title={Duality in {C}alabi--{Y}au moduli space},
    author={{Greene}, Brian R.},
    author={{Plesser}, Ronen M.},
    journal={Nuclear Physics B},
    volume={338},
    number={1},
    pages={15--37},
    year={1990}
  }

\bib{Goto}{article}{
  title     = {Arithmetic of weighted diagonal surfaces over finite fields},
  author    = {Goto, Yasuhiro},
  journal   = {J. Number Theory},
  publisher = {Elsevier BV},
  volume    =  {59},
  number    =  {1},
  pages     = {37--81},
  year   =  {1996}
}

\bib{GouveaYui95}{book}{
   author={Gouv\^{e}a, Fernando Q.},
   author={Yui, Noriko},
   title={Arithmetic of diagonal hypersurfaces over finite fields},
   series={London Mathematical Society Lecture Note Series},
   volume={209},
   publisher={Cambridge University Press, Cambridge},
   date={1995}
}

\bib{Koblitz84}{book}{
   author={Koblitz, Neal},
   title={$p$-adic numbers, $p$-adic analysis, and zeta-functions},
   series={Graduate Texts in Mathematics},
   volume={58},
   edition={2},
   publisher={Springer-Verlag, New York},
   date={1984}
}

       \bib{Kra}{thesis}{
   author={Krawitz, Marc},
   title={FJRW rings and Landau-Ginzburg mirror symmetry},
   note={Thesis (Ph.D.)--University of Michigan},
   date={2010},
}

    \bib{Kre}{article}{
      title={The mirror map for invertible {LG} models},
      author={Kreuzer, Maximilian},
      journal={Physics Letters B},
      volume={328},
      number={3},
      pages={312--318},
      year={1994},
    }

    \bib{KS}{article}{
      title={On the classification of quasihomogeneous functions},
      author={Kreuzer, Maximillian},
      author={Skarke, Harald},
      journal={Communications in mathematical physics},
      volume={150},
      number={1},
      pages={137--147},
      year={1992},
    }

\bib{MagyarWhitcher18}{article}{
   author={Magyar, Christopher},
   author={Whitcher, Ursula},
   title={Strong arithmetic mirror symmetry and toric isogenies},
   conference={
      title={Higher genus curves in mathematical physics and arithmetic
      geometry},
   },
   book={
      series={Contemp. Math.},
      volume={703},
      publisher={Amer. Math. Soc., [Providence], RI},
   },
   date={[2018] \copyright 2018},
   pages={117--129}
}

\bib{M}{book}{
   author={Monsky, Paul},
   title={$p$-adic analysis and zeta functions},
   series={Lectures in Mathematics, Department of Mathematics, Kyoto
   University},
   volume={4},
   publisher={Kinokuniya Book-Store Co., Ltd., Tokyo},
   date={1970},
   pages={iv+117},
}

  \bib{P}{thesis}{
   author={Perunicic, Andrija},
   title={Arithmetic Aspects of Berglund-Hubsch Duality},
   note={Thesis (Ph.D.)--Brandeis University},
   date={2013},
}

\bib{Rose07}{article}{
   author={Rose, Michael A.},
   title={Frobenius action on $l$-adic Chen-Ruan cohomology},
   journal={Commun. Number Theory Phys.},
   volume={1},
   date={2007},
   number={3},
   pages={513--537},

}

\bib{SalernoWhitcher22}{article}{
   author={Salerno, Adriana},
   author={Whitcher, Ursula},
   title={Hasse-Witt matrices and mirror toric pencils},
   journal={Adv. Theor. Math. Phys.},
   volume={26},
   date={2022},
   number={9},
   pages={3345--3375}
}

\bib{SchwarzShapiro09}{article}{
   author={Schwarz, A.},
   author={Shapiro, I.},
   title={Twisted de Rham cohomology, homological definition of the integral
   and ``physics over a ring''},
   journal={Nuclear Phys. B},
   volume={809},
   date={2009},
   number={3},
   pages={547--560}
}

    \bib{W}{article}{
   author={Wan, Daqing},
   title={Mirror symmetry for zeta functions},
   note={With an appendix by C. Douglas Haessig},
   conference={
      title={Mirror symmetry. V},
   },
   book={
      series={AMS/IP Stud. Adv. Math.},
      volume={38},
      publisher={Amer. Math. Soc., Providence, RI},
   },
   date={2006},
   pages={159--184},
}

\bib{Wan05}{article}{
   author={Wan, Daqing},
   title={Arithmetic mirror symmetry},
   journal={Pure Appl. Math. Q.},
   volume={1},
   date={2005},
   number={2, Special Issue: In memory of Armand Borel.},
   pages={369--378}
}

\bib{WanFu06}{article}{
   author={Fu, Lei},
   author={Wan, Daqing},
   title={Mirror congruence for rational points on Calabi-Yau varieties},
   journal={Asian J. Math.},
   volume={10},
   date={2006},
   number={1},
   pages={1--10}
}

\bib{Weil49}{article}{
   author={Weil, Andr\'{e}},
   title={Numbers of solutions of equations in finite fields},
   journal={Bull. Amer. Math. Soc.},
   volume={55},
   date={1949},
   pages={497--508}
}

\bib{Whitcher21}{article}{
   author={Whitcher, Ursula},
   title={Counting points with Berglund-H\"{u}bsch-Krawitz mirror symmetry},
   conference={
      title={Singularities, mirror symmetry, and the gauged linear sigma
      model},
   },
   book={
      series={Contemp. Math.},
      volume={763},
      publisher={Amer. Math. Soc., [Providence], RI},
   },
   date={[2021] \copyright 2021},
   pages={155--170}
}

\bib{Yasuda04}{article}{
   author={Yasuda, Takehiko},
   title={Twisted jets, motivic measures and orbifold cohomology},
   journal={Compos. Math.},
   volume={140},
   date={2004},
   number={2},
   pages={396--422}
}

\end{biblist}
\end{bibdiv}

\end{document}